\documentclass[final]{siamltex}
\usepackage{}
\usepackage[dvipdfm]{graphicx}
\usepackage{multirow,bigstrut}
\usepackage{amsmath,amsfonts,amssymb}
\usepackage{mathrsfs}
\usepackage{color}

\newtheorem{niao}{\hspace{1mm}Theorem}[section]
\newtheorem{lem}{\hspace{1mm}Lemma}[section]

\newcommand{\beq}{\begin{equation}}
\newcommand{\eeq}{\end{equation}}
\newcommand{\bey}{\begin{eqnarray}}
\newcommand{\eey}{\end{eqnarray}}

\newcommand{\txn}{\textnormal}

\title{High order fast algorithm for the Caputo fractional derivative\thanks{ This work is supported
by the National Natural Science Foundation of China (grants \#
11501554, 91630205), and  the Fundamental Research Funds for the Central Universities (project \# 106112017CDJXY100006).}}

\author{Kun Wang\thanks
{College of Mathematics and Statistics, Chongqing University, Chongqing, 401331, China ({\tt kunwang@cqu.edu.cn}).}
\and
Jizu Huang\thanks{LSEC, Institute of Computational Mathematics and
    Scientific/Engineering Computing, Academy of Mathematics and Systems
    Science, Chinese Academy of Sciences, Beijing(100190), China
    ({\tt huangjz@lsec.cc.ac.cn}).} Corresponding author:
huangjz@lsec.cc.ac.cn.
  }

\begin{document}
\maketitle
\begin{abstract}
{In the paper, we present a high order fast algorithm with almost optimum memory for the Caputo fractional derivative,
which can be expressed as a convolution of $u'(t)$ with the kernel $(t_n-t)^{-\alpha}$.
In the fast algorithm, the interval $[0,t_{n-1}]$ is split into  nonuniform subintervals. 
The number of the subintervals is in the order of $\log n$ at the $n$-th time step.
The fractional kernel function is approximated by a polynomial function of $K$-th degree with a uniform absolute error on each subinterval.
We save $K+1$ integrals on each subinterval, which can be written as a convolution of $u'(t)$ with a polynomial base function.
As compared with the direct method, the proposed fast algorithm reduces the storage requirement  and computational cost from $O(n)$ to $O((K+1)\log n)$
at the $n$-th time step.
We prove that the convergence rate of the fast algorithm is the same as the direct method even a high order direct method
is considered. The convergence rate and efficiency of the fast algorithm are illustrated via several numerical examples.}
\end{abstract}

\textbf{Key words.}\, Caputo fractional derivative, fast algorithm, polynomial approximation, error estimates, fractional diffusion equations.

{\bf\indent AMS Subject Classifications:} \,\,65F10, 78M05
\section{Introduction}
\label{sec1}
In recent years, the fractional differential equation becomes popular
since they can faithfully capture the dynamics of physical process in 
many scientific phenomena, such as the dynamics of biology, ecology, and control system 
\cite{jiang,kilbas,langlands,langlands10,li09,lin11,metzler,oldham,podlubny99,ren}. 
There are mainly two kinds of definitions of the fractional time derivative in the literatures:  
the Riemann--Liouville fractional derivative \cite{chen,cui} and the
Caputo fractional derivative \cite{jiang,ren,sun06,yang,zhang}. 
In fractional partial differential equations (PDEs), the time fractional derivatives are commonly defined using the
Caputo fractional derivatives since the Riemann--Liouville approach needs initial conditions containing the limit values 
of Riemann--Liouville fractional derivative at the origin of time $t=0$, 
whose physical meanings are not very clear.
In the paper, 
we focus on the high order fast method of the PDEs including the Caputo fractional derivative which is defined by
\begin{align}\label{1.1}
^C_0{\cal D}^\alpha_tu(t)=\frac{1}{\Gamma(1-\alpha)}\int\limits_0^t\frac{u'(x,\tau)}{(t-\tau)^\alpha}\txn d\tau,~~~~~0<\alpha<1,
\end{align}
where $\Gamma(\cdot)$ is the gamma function and $t$ is in $[0,T]$.

One of the popular schemes of discretizing the Caputo fractional derivative is usually called $L1$ formula \cite{gao12, lin07, sun06},
which applies the piecewise linear interpolation of $u(x,t)$ with respect to $t$ in the integrand on each subinterval.
For $0<\alpha<1$, the scheme enjoys a $2-\alpha$  order of convergence rate. 
Some other methods with a $2-\alpha$  order of convergence rate are also studied, such as the  Crank--Nicolson-Type discretization \cite{zhang} 
and  the matrix transfer technique  \cite{yang}. 
By applying the fractional linear multistep methods  in discretizing the Caputo fractional derivative, 
an exactly second order scheme with unconditional stability is constructed in \cite{zeng}. 
By using the piecewise quadratic interpolation of $u(t)$ in the integrand  for the Caputo fractional derivative,  
Gao and Sun \cite{gao14} propose a new discrete formula (called $L1-2$ formula)  which achieves $3-\alpha$ order accuracy. 
Recently, based on the block-by-block approach, Cao et al. improve the discretization in time and  a  scheme with
order $3+\alpha$ is successfully constructed in \cite{cao}. On the other hand, a scheme with spectral accuracy is also investigated  in \cite{li09}.
These direct methods require the storage of all previous solutions, which leads to $O(n)$ storage and $O(n)$ flops at the $n$-th time step. 
Therefore, an efficient and reliable fast method is needed for long time large scale simulation of fractional PDEs. 
 
 In order to save memory and computational cost, some fast methods are developed.  
 In \cite{lubich2002}, a fast convolution method for the Caputo fractional derivative is proposed, in which
 the kernel function is first expressed by it inverse Laplace transform. The idea is then extended to calculate the 
 Caputo fractional  derivative in \cite{lopez2008, schadle2006, zeng2017}.
The storage requirement  and the computational
 cost of those fast methods both are $O(\log n)$ at the $n$-th time step, which are less than that of the direct methods.
 In \cite{ren}, the Laplace transform method is used to transforme the fractional differential equation into an approximation local problem.
 In \cite{lijr2010}, the Gauss--Legendre quadrature is applied to construct a fast algorithm based on the formula
 $t^{\alpha-1}=\frac{1}{\Gamma(\alpha)\Gamma(1-\alpha)}\int_0^\infty e^{-\xi t}\xi^{-\alpha}\txn d \xi$.
 The fast method is improved by Jiang et al. \cite{jiang} by using the Gauss--Jacobi and Gauss--Legendre quadratures together,
 which only requires  the storage  and the computational
 cost in the order of $O(\log n)$ at the $n$-th time step.
The fast scheme is proved to be unconditionally stable and has a convergence order of $2-\alpha$ \cite{jiang}.
In \cite{mclean2012}, McLean proposes a fast method to approximate the fractional integral by replacing the fractional kernel with a degenerate kernel.
In \cite{Baffet}, a kernel compression method is presented to discretize the fractional integral operator, which is based on multipole
approximation to the Laplace transform of the fractional kernel.

In this paper, we aim to present a high order fast algorithm with almost optimum memory for the Caputo
fractional derivative, which  has the same order of convergence rate as that of
a given direct method. At each time step, the fractional derivative is  decomposed into the local part and the history part.
The local part, which is an integral on interval $[t_{n-1},t_n]$, is calculated by a direct method. In order to evaluate the history part by 
a high efficient approach with low cost,
we split the interval $[0,t_{n-1}]$ into nonuniform subintervals at the $n$-th time step. The total number of the subintervals
is in the order of $\log n$. We save $K+1$ integrals on each subinterval and evaluate the history part with those integrals.
To reuse the storages in the previous time step, we approximate the fractional kernel function by a polynomial function
with a uniform absolute error on the subintervals. As compared with a direct method based on a given polynomial interpolation 
of $u(t)$, the new proposed fast method is proved to enjoy the same convergence order
by controlling the absolute error of the approximate polynomial function,
but only requires computational storage and flops in the order of $\log n$ at the $n$-th time step.

The remainder of the paper is organized as follows. In Section 2, we describe the high order fast algorithm for 
the evolution of the Caputo fractional derivative and provide error analysis of our method. 
In Sections 3 and 4, we apply the proposed high order fast algorithm to solve the linear and nonlinear fractional diffusion PDEs.
The stability and numerical error analysis for the new algorithm and some existing methods are carefully studied.
The numerical results demonstrate that our high order algorithm has the same convergence order as
the corresponding direct method. Finally, some brief conclusions are given in Section 5.

\section{High order fast algorithm with almost optimum memory of the Caputo fractional derivative} \label{intro-sec}
In this section,
we consider the high order fast algorithm with almost optimum memory for the evolution of the Caputo fractional derivative,
which is defined as in (1.1).
Suppose that the time interval $[0,T]$ is covered by a set of grid points $\Omega_t:=\{t_n,\,n=0,1,\cdots,N_T\}$, with $t_0=0$, $t_{N_T}=T$,
$t_{n+\frac 1 2}=\frac{t_n+t_{n+1}}{2}$, and 
$\Delta t_n=t_n-t_{n-1}$. For simplify, 
we only consider a uniform distribution of the grid points which means $\Delta t_n=h$ for all $n$.
We will simply denote $u(t_n)$ by $u^n$.
Let us denote the piecewise linear interpolation function of $u(t)$
as $\Pi_{1,h}u(t)$ for any $j\geq 1$, i.e.,
$$
\Pi_{1,h}u(t)=u^{j-1}\frac{t_j-t}{h}+u^j\frac{t-t_{j-1}}{h},~~\txn{for}~t\in[t_{j-1},t_j].
$$
Suppose $\Pi_{2,h}u(t)$ be a piecewise quadratic interpolation function of $u(t)$ for $j\geq2$, which is given as
\begin{equation}
\begin{aligned}
\Pi_{2,h}u(t)=&u^{j-2}\frac{(t-t_{j-1})(t-t_j)}{2h^2}+u^{j-1}\frac{(t-t_{j-2})(t_j-t)}{h^2}
\\&+u^{j}\frac{(t-t_{j-1})(t-t_{j-2})}{2h^2},~~\txn{for}~t\in[t_{j-1},t_j].
\end{aligned}
\end{equation}
It follows from the interpolation theory that $\Pi_{1,h}u(t)$ and $\Pi_{2,h}u(t)$ have a second-order accuracy
and third-order accuracy in time for smooth $u(t)$, respectively. Let us denote $\big(\Pi_{1,h}u(t)\big)'$ and 
$\big(\Pi_{2,h}u(t)\big)'$ as follows
$$
\big(\Pi_{1,h}u(t)\big)'=\delta_t u^{j-\frac12},~~\txn{for}~t\in[t_{j-1},t_j],
$$
and
$$
\big(\Pi_{2,h}u(t)\big)'=\delta_t u^{j-\frac12}+\delta_t^2u^{j-1}(t-t_{j-\frac12}),~~\txn{for}~t\in[t_{j-1},t_j],
$$
respectively. Here $\delta_t u^{j-\frac12}=\frac{u^j-u^{j-1}}{h}$
and $\delta_t^2u^j=\frac{\delta_t u^{j+\frac12}-\delta_t u^{j-\frac12}}{h}$.

For simplicity, we denote $\Pi_{2,h}u(t)=\Pi_{1,h}u(t)$ for $t\in[t_0,t_1]$.
For $0<\alpha<1$, the most popular scheme for calculating of the Caputo fractional derivative
is called the $L1$ formula \cite{gao12,lin07,sun06}, whose accuracy is $2-\alpha$ order in time.
In the $L1$ formula, $u(t)$ is replaced by the piecewise linear function $\Pi_{1,h}u(t)$ (as shown in Fig.~\ref{fig:1-1}-(a)).
Another popular high order scheme ($L1-2$ formula) achieves  $3-\alpha$ order accuracy \cite{gao14}, 
in which $u(t)$ is approximated by the linear interpolation function $\Pi_{1,h}u(t)$ at interval $[t_0,t_1]$ and
the quadratic interpolation function $\Pi_{2,h}u(t)$ at interval $[t_j,t_{j+1}]$ for $j\geq 1$.
It is well known that the $L1$ formula and $L1-2$ formula require the storage of all previous function 
values of $u^0, u^1,\cdots,u^n$ and 
$O(n)$ flops computational cost at the $(n+1)$-th time step.
For a long time simulation, the direct schemes require very large storage of  memory and high computational cost.

Now let us take the Caputo fractional derivative  (\ref{1.1}) as a convolution integral, 
in which $\frac{1}{(t-\tau)^\alpha}$ can be viewed as a kernel function (weight function). 
For $0<\alpha<1$, the kernel function increases as $\tau$ goes from $0$ to $t$. 
To save memory and computational cost, a natural idea is to cut the integral by a given integer $\bar{S}$ at the $n$-th time step, which means
\begin{equation}
\int\limits_0^{t_n}\frac{ u'(\tau)}{(t_n-\tau)^{\alpha}}\txn d \tau\approx \int\limits^{t_n}_{t_{j_0}}\frac{ u'(\tau)}{(t_n-\tau)^{\alpha}}\txn d \tau
\approx \sum_{j=j_0+1}^{n}\delta_tu^{j-\frac{1}{2}}\int\limits^{t_j}_{t_{j-1}}\frac{1}{(t_n-\tau)^{\alpha}}\txn d \tau,
\end{equation}
where  $u(\tau)$ is approximated by $\Pi_{1,h}u(\tau)$ and $j_0=\max\{0,n-\bar S\}$.
In the following of the paper, we denote this approximation as the cut off approach (as shown in Fig.~\ref{fig:1-1}-(b)). 
It is important to noting that the cut off approach 
only need limited memory according to the given integer $\bar{S}$ for any large $N_T$.
However, the numerical simulation shows that the accuracy of the cut off approach is unacceptable. 

\begin{figure}[htb!]
\begin{center}
{\includegraphics[width=0.47\textwidth]{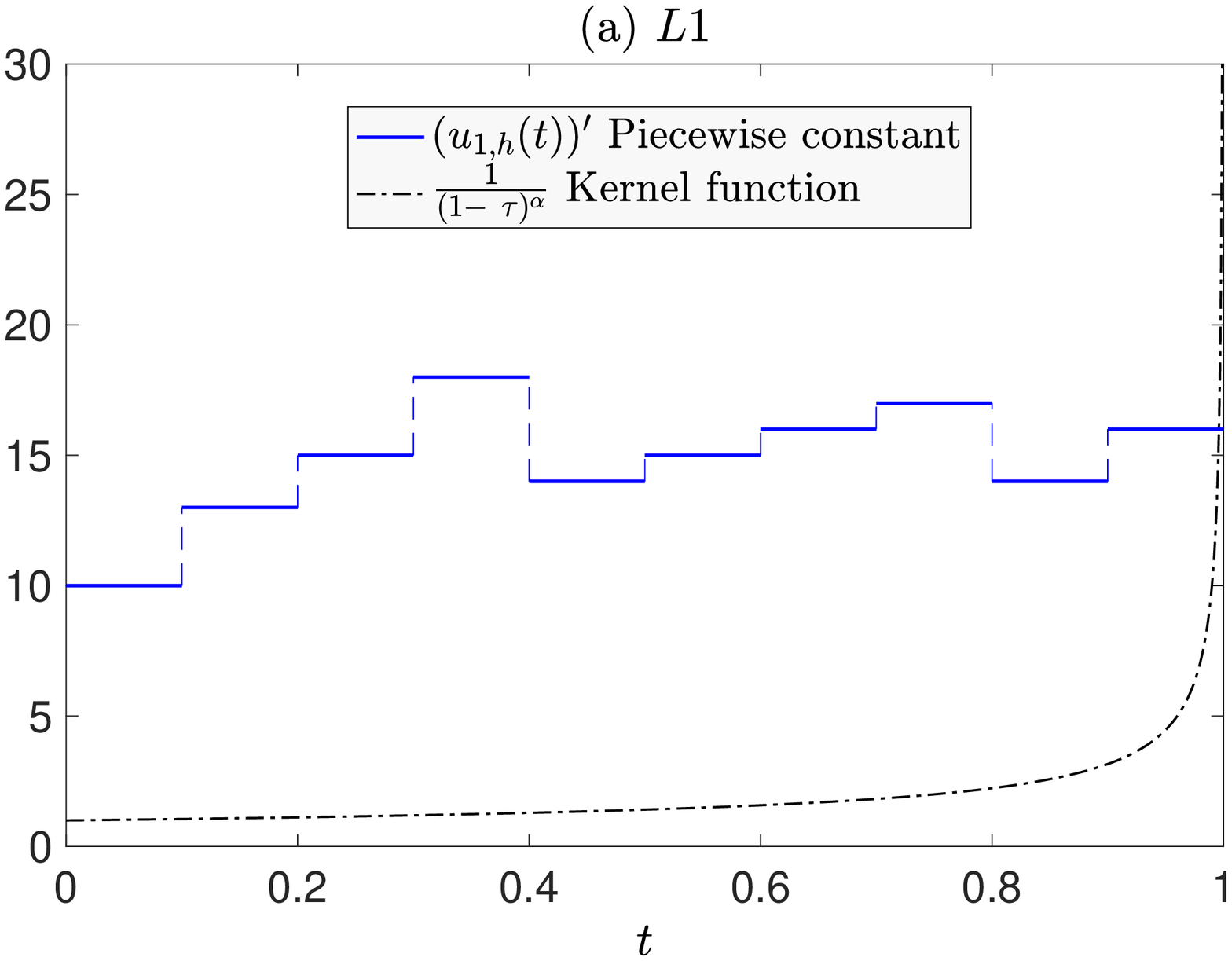}}
{\includegraphics[width=0.47\textwidth]{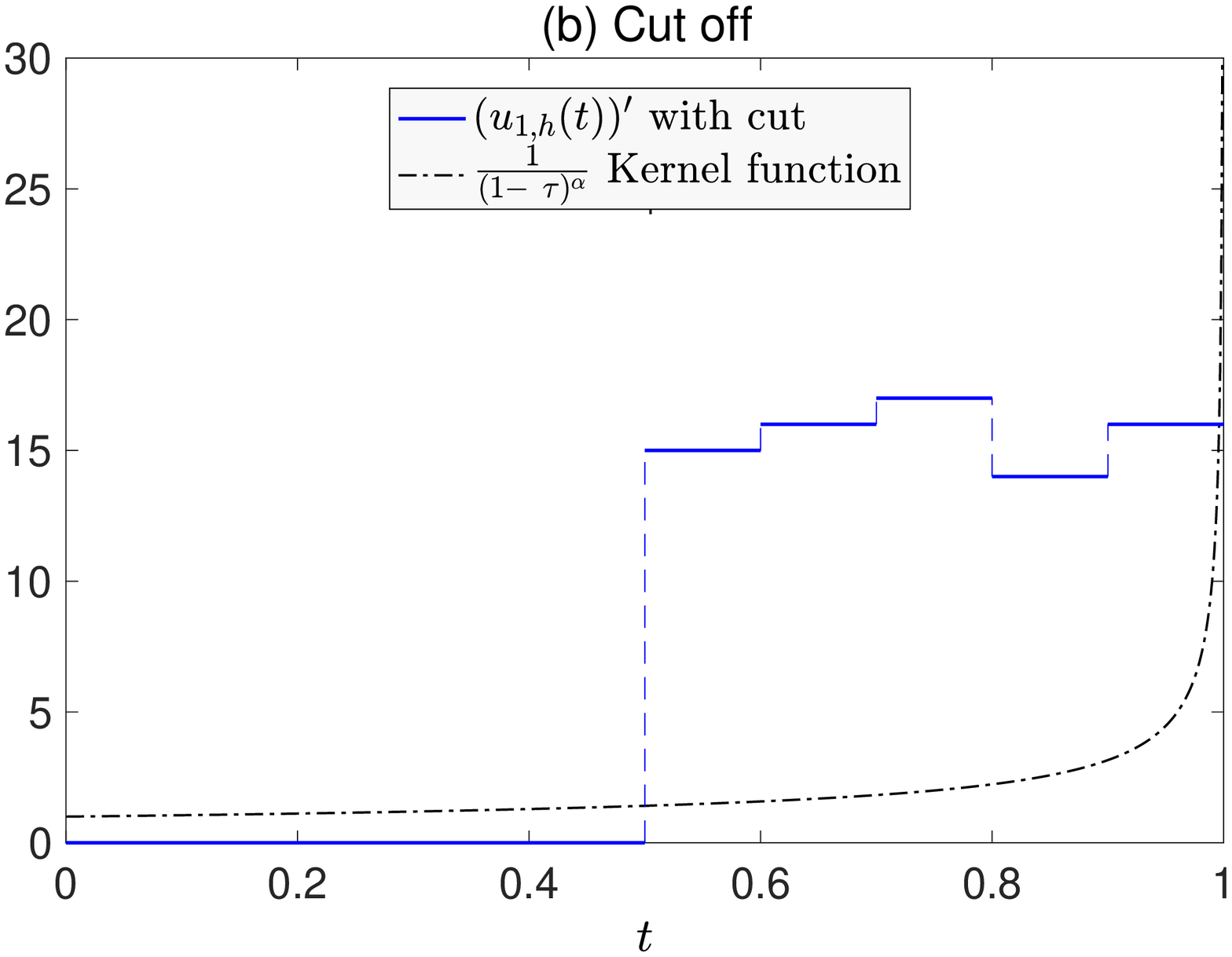}}
{\includegraphics[width=0.47\textwidth]{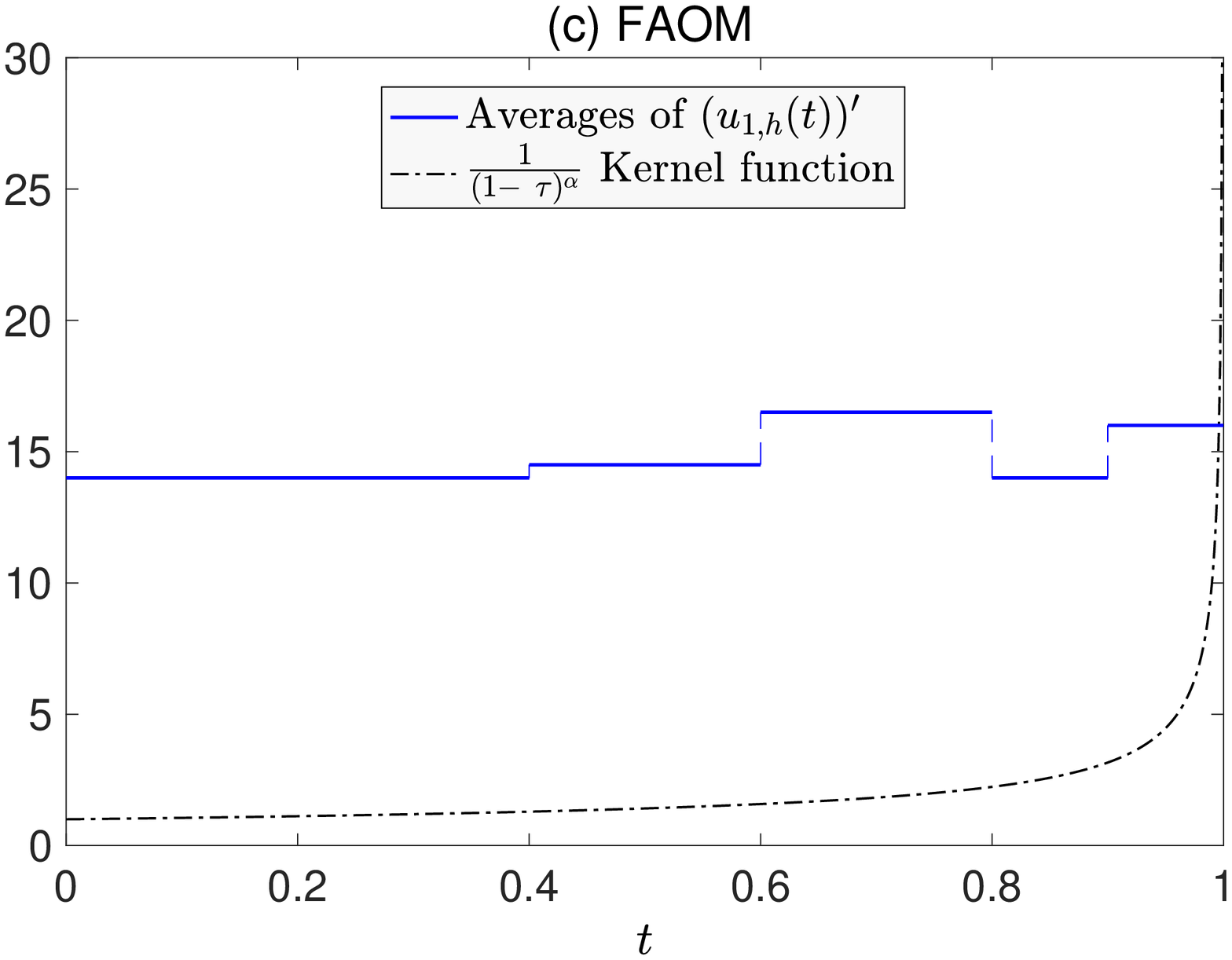}}
{\includegraphics[width=0.47\textwidth]{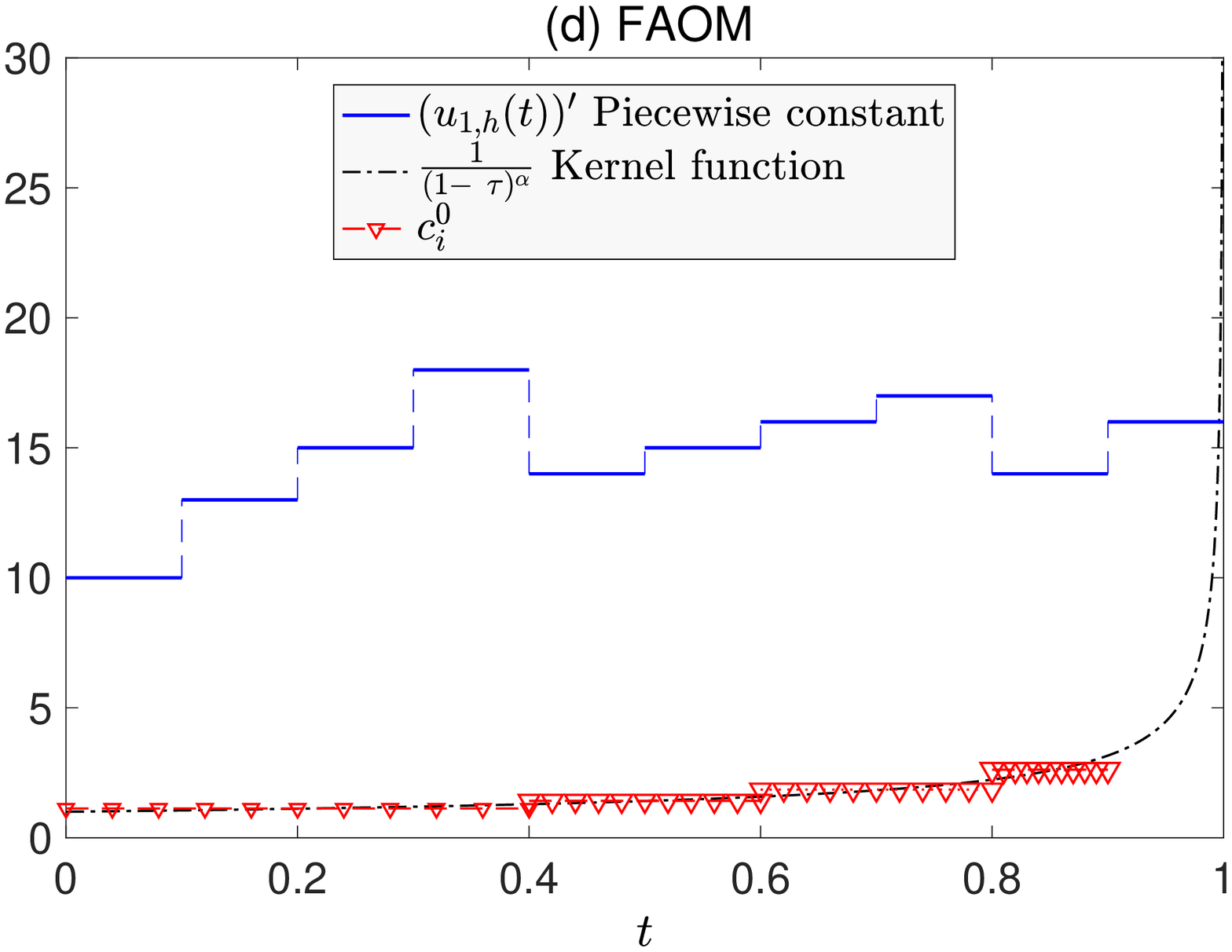}}
\end{center}
\caption{Example for the $L1$ formula (a), the cut off approximation (b), and the FAOM algorithm (c-d).
$T=1$, $h=0.1$. In the cut off approximation, $\bar{S}=5$. $j_1=0$, $j_2=4$, $j_3=6$, $j_4=8$, $j_5=9$
in the FAOM algorithm.
  }\label{fig:1-1}
\end{figure}

We next 
present our  fast evaluation method based on  the understanding of
the $L1$ formula and the cut off approach.
For convenience,  we first introduce some useful definitions in here.
For any given vector ${\mathbf V}$, let us define a backward operator ${\cal B}$ by ${\mathbf U}={\cal B}(v,{\mathbf V})$ with
${U}_1=v$ and ${ U}_i={ V}_{i-1}$ for $i=2,3,\cdots,m+1$. Here
$m$ is the length of ${\mathbf V}$. Let ${\cal F}$ be a forward operator defined by
${\mathbf U}={\cal F}(M,j,{\mathbf V})$, where ${ U}_i={ V}_i$ for $i<j$ and ${ U}_i={ V}_{i+M-1}$
for $i\geq j$ with $M\geq2$. Let ${\cal F}_m$ be a modified forward operator defined by
${\mathbf U}={\cal F}_m(M,j,v,{\mathbf V})$, where ${ U}_i={ V}_i$ for $i<j$, ${ U}_i=v$
for $i=j$, and ${ U}_i={ V}_{i+M}$ for $i> j$.

In the cut off approach, the solutions at the previous several time steps  are saved since those solutions are important and correspond to
large weight functions. However, the numerical simulation suggests that we should take the other solutions  into account,
even those solutions correspond to small weight functions. To balance the storage of memory and the accuracy of solution, 
we save the averages of  $(u_{J,h}(\tau))'$
in the nonuniform subintervals $[t_{j_i},t_{j_{i+1}}]$ in the fast evolution algorithm (as shown in Fig.~\ref{fig:1-1}-(c)).
It is clear that we hope the averages of $(u_{J,h}(\tau))'$ at the previous time steps 
can be reused in the current time step and the following time steps.
Furthermore, the length of the subinterval $[t_{j_i},t_{j_{i+1}}]$
should  decrease as $i$ increases, since  the kernel function  is an increasing function.
By using an interpolation $\Pi_{J,h}u(\tau)$ for $u(t)$, the integral in equation (\ref{1.1}) 
at time $t=t_n$ can be  approximated as follows
\begin{equation}\label{eq:al-1}
\begin{aligned}
\int\limits_0^{t_n}&\frac{ u'(\tau)}{(t_n-\tau)^{\alpha}}\txn d \tau=\int\limits_{t_{n-1}}^{t_n}\frac{ u'(\tau)}{(t_n-\tau)^{\alpha}}\txn d \tau
+\int\limits_0^{t_{n-1}}\frac{ u'(\tau)}{(t_n-\tau)^{\alpha}}\txn d \tau:={\cal I}_l(t_n)+{\cal I}_h(t_n)\\
&\approx 
\int\limits_{t_{n-1}}^{t_n}\frac{\big(\Pi_{J,h} u(\tau)\big)'}{(t_n-\tau)^{\alpha}}\txn d \tau
+\int\limits_0^{t_{n-1}}\frac{\big(\Pi_{J,h} u(\tau)\big)'}{(t_n-\tau)^{\alpha}}\txn d \tau
\\&\approx\int\limits_{t_{n-1}}^{t_n}\frac{\big(\Pi_{J,h} u(\tau)\big)'}{(t_n-\tau)^{\alpha}}\txn d \tau+\sum\limits_i \int\limits_{t_{j_i}}^{t_{j_{i+1}}}
\frac{\big(\Pi_{J,h} u(\tau)\big)'}{t_{j_{i+1}}-t_{j_i}}\txn 
d \tau\int\limits_{t_{j_i}}^{t_{j_{i+1}}}\frac{1}{(t_n-\tau)^{\alpha}}\txn d \tau.
\end{aligned}
\end{equation}
Here the integral is decomposed to the local part ${\cal I}_l(t_n)$ and the history part ${\cal I}_h(t_n)$.
The balance between the  storage of memory and the accuracy of solution can be done by choosing suitable subintervals $[t_{j_i},t_{j_{i+1}}]$.
It is worth to pointing out that  this approximation is the same with the $L1$ formula by setting $t_{j_{i+1}}-t_{j_i}=h$ and $J=1$.
We now propose a fast evolution approach (Algorithm 1) to reach an almost optimum memory by  constructing a special sequence of subintervals. 
The approach is named as the fast algorithm with almost optimum memory
(FAOM) of the Caputo fractional derivative.


\noindent
\noindent\underline{{~~~~~~~~~~~~~~~~~~~~~~~~~~~~~~~~~~~~~~~~~~~~~~~~~~~~~~~~~~~~~~~~~~~~~~~~~~~~~~~~~~~~~~~~~~~~~~~~~~~~~~~~~~~~~~~}}

\noindent\textbf{Algorithm 1}: FAOM of the Caputo fractional derivative. 
\vspace{-0.3cm}

\noindent\underline{{~~~~~~~~~~~~~~~~~~~~~~~~~~~~~~~~~~~~~~~~~~~~~~~~~~~~~~~~~~~~~~~~~~~~~~~~~~~~~~~~~~~~~~~~~~~~~~~~~~~~~~~~~~~~~~~}}

\vspace{0.15cm}
\noindent\textbf{Initialization:} Let ${\mathbf U}^n=[U_1^n,U_2^n,\cdots,U^n_{{\cal M}_n}]$ 
be a vector, whose elements are the averages of $ (u_{J,h}(\tau))'$ on given subintervals,
  and ${\mathbf I}^n=[I^n_1,I^n_2,\cdots,I^n_{{\cal M}_n}]$ be a vector, whose elements are the starts of subintervals. Set $\mathbf U^0=0$
  and $\mathbf I^0=0$. Pre-chosen an integer  ${\cal N}_{\tau}\geq 2$ to control the storage of memory. 
  
\noindent\textbf{Start time loop:} $t_n=nh$ with $n=2,\,3,\ldots,\,N_T$.
\begin{description}
  \item[~~Step 1] (Updating the storage):  Update the temporary storage vector by  $\tilde{\mathbf U}^n=
  {\cal B}\big(\tilde u_{a},\mathbf U^{n-1}\big)$
  and  vector ${\tilde {\mathbf I}}^n={\cal B}(t_{n-2},{\mathbf I}^{n-1})$,
  where $\tilde u_{a}=\frac{1}{h}{\int\limits_{t_{n-2}}^{t_{n-1}}\big(\Pi_{J,h} u(\tau)\big)'\txn d\tau}$.
    \item[~~Step 2] (Optimizing of the storage): Obtain the storage vector $\mathbf U^n$ and vector 
    $\mathbf I^n$ as follows.   
  \end{description}
\begin{itemize}
  \item If there exists $i_0$ such that $\tilde I^n_{i_0}-\tilde I^n_{i_0+1}=\cdots=
   \tilde I^n_{i_0+2{\cal N}_{\tau}-2}-\tilde I^n_{i_0+2{\cal N}_{\tau}-1}$, let $\mathbf I^n={\cal F}({\cal N}_{\tau},i_0+{\cal N}_{\tau},
   \tilde{\mathbf I}^n)$ and $\mathbf U^n={\cal F}_m({\cal N}_{\tau},i_0+{\cal N}_{\tau}, v,
   \tilde{\mathbf U}^n)$, where $\tilde I^n_0=I^n_0=t_{n-1}$ and
   $v=\frac{1}{{\cal N}_\tau}\sum\limits_{i=i_0+{\cal N}_\tau}^{i_0+2{\cal N}_\tau-1}\tilde {U}^n_i$. 
 Set $\tilde{\mathbf I}^n={\mathbf I}^n$, $\tilde{\mathbf U}^n={\mathbf U}^n$ and redo optimization
   until there does not exist $i_0$ satisfying $I^n_{i_0}-I^n_{i_0+1}=\cdots=
   I^n_{i_0+2{\cal N}_{\tau}-2}-I^n_{i_0+2{\cal N}_{\tau}-1}$.
  \end{itemize}
  \begin{description}
  \item[~~Step 3] (Calculating the Caputo fractional derivative):  
   Approximate the history part ${\cal I}_h(t_n)$ as follows
\begin{equation}
\begin{aligned}
{\cal I}_h(t_n)=\int\limits_0^{t_{n-1}}\frac{ u'(\tau)}{(t_n-\tau)^{\alpha}}\txn d \tau\approx\sum\limits_{i=0}^{{\cal M}_n-1}
U^n_{i+1}\int\limits_{I^n_{i+1}}^{I^n_{i}}\frac{1}{(t_n-\tau)^{\alpha}}\txn d \tau,
\end{aligned}
\end{equation}
where ${\cal M}_n$ is the length of the vector $\mathbf U^n$. The numerical Caputo fractional derivative is finally calculated according to (\ref{eq:al-1}). 
  \end{description}

\noindent\textbf{End of time loop}.

\vspace{-0.2cm}
\noindent\underline{{~~~~~~~~~~~~~~~~~~~~~~~~~~~~~~~~~~~~~~~~~~~~~~~~~~~~~~~~~~~~~~~~~~~~~~~~~~~~~~~~~~~~~~~~~~~~~~~~~~~~~~~~~~~~~~~}}
\vspace{0.2cm}

\begin{figure}[htb!]
\begin{center}
{\includegraphics[width=0.92\textwidth]{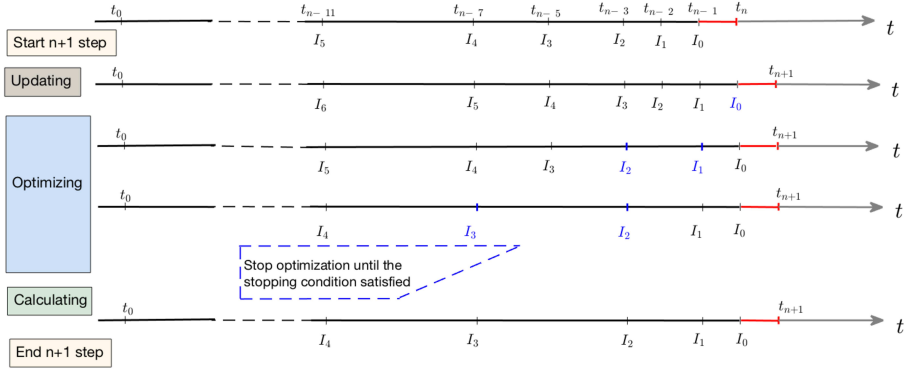}}
\end{center}
\caption{A simple example to explain how we update the  vector $\mathbf I^{n+1}$ at the $(n+1)$-th time step. The optimization of  
 the storage vector $\mathbf U^{n+1}$ is similar. In this example, we set ${\cal N}_{\tau}=2$.
 The part with red color in the time axle is denoted as the  local part ${\cal I}_l(t_n)$ and the part with black color is denoted
 as the history part ${\cal I}_h(t_n)$.}\label{fig:1-2}
\end{figure}

In the following of the paper, we simplify $I_i^n$ as $I_i$ in the absence of ambiguity. 
In the FAOM method, the storage requirement and the overall computational cost both are dependent on the number of
the nonuniform subintervals, which is equal to the length of the vector $\mathbf U^n$.
At the $n$-th time step, the nonuniform subintervals  obtained by the FAOM method satisfy the following properties.
\begin{enumerate}
\item The union of  all subintervals ($\cup_{i=1}^{{\cal M}_n}[I_{i},I_{i-1}]$)
equals $[0,t_{n-1}]$.
\item The length of the subinterval $[I_{i},I_{i-1}]$ is equal to that of the subinterval $[I_{i-1},I_{i-2}]$ or  ${\cal N}_{\tau}$ times of it.
\item The length of the subinterval $[I_{i},I_{i-1}]$ is $h\big({\cal N}_{\tau}\big)^{K_i} $ 
with $K_i\in \mathbb {Z}^+$. Here $\{K_i\}_{i=1}^{{\cal M}_n}$ is a descending sequence.
\item For any $K_i< K_{{\cal M}_n}$, there are at least ${\cal N}_{\tau}-1$ and at most $2{\cal N}_{\tau}-2$
subintervals, whose length are
$h\big({\cal N}_{\tau}\big)^{K_i} $.
\item{ Most of the subintervals at the previous time step are unchanged in the current time step.  }
\end{enumerate}
To further describe the approach  clearly, we take a special case  as an example and show how the 
subintervals change from the $n$-th time step to the $(n+1)$-th time step in Fig.~\ref{fig:1-2}.
While there are $2{\cal N}_\tau-1$ subintervals with the same length, the FAOM algorithm combines 
 ${\cal N}_\tau$ of them to a large subinterval during the optimizing step. 
The following lemmas show the relationship between the length of the vector $\mathbf U^n$ and $n$.

\begin{lem}\label{lem:1}
Let $I_{i-1}-I_{i}=h\big({\cal N}_{\tau}\big)^{K_i}$ for $i=1,\,2\,\cdots,\,{\cal M}_n$.
The following inequalities hold 
\begin{equation}\label{eq:tem:1}
\begin{aligned}
&\frac{i}{2{\cal N}_\tau-2}-1\leq K_{i}\leq \frac{i-1}{{\cal N}_{\tau}-1},\\
&\frac{\tau-I_{i}}{t_n-I_{i}}\leq\frac 12,\quad   \forall \tau\in[I_{i},I_{i-1}].
\end{aligned}
\end{equation}
\end{lem}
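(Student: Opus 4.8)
The plan is to translate both assertions of Lemma~\ref{lem:1} into elementary counting over the levels of the nonuniform subintervals produced by Algorithm~1. Write $L_j:=I_{j-1}-I_j=h(\mathcal N_\tau)^{K_j}$ for the length of the $j$-th subinterval, so that $I_0=t_{n-1}$ and, by Property~1, $\sum_{j=1}^{\mathcal M_n}L_j=t_{n-1}$; more useful is the identity $t_n-I_{i-1}=(t_n-t_{n-1})+\sum_{j=1}^{i-1}L_j=h+\sum_{j=1}^{i-1}L_j$. By Properties~2 and 3 the exponents $K_1\le K_2\le\cdots\le K_{\mathcal M_n}$ are nondecreasing and change by at most one from one index to the next, and the most recent subinterval always has length $h$, so $K_1=0$; consequently every integer value $0,1,\dots,K_i$ is attained by some $K_j$ with $j\le i$, and every subinterval whose exponent is strictly less than $K_i$ has index strictly less than $i$. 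By Property~4 each value $v<K_{\mathcal M_n}$ is carried by at least $\mathcal N_\tau-1$ subintervals, and by the optimizing step of Algorithm~1 (which collapses any run of $2\mathcal N_\tau-1$ equal-length subintervals) no value is carried by more than $2\mathcal N_\tau-2$ subintervals. These are the only facts I will use.

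For the bounds on $K_i$: the subintervals indexed $1,\dots,i-1$ contain, for every $v\in\{0,1,\dots,K_i-1\}$, at least $\mathcal N_\tau-1$ subintervals of exponent $v$ (here $v<K_i\le K_{\mathcal M_n}$, so Property~4 applies), and these families are disjoint across $v$; hence $i-1\ge(\mathcal N_\tau-1)K_i$, which is the upper bound $K_i\le(i-1)/(\mathcal N_\tau-1)$. Conversely, the subintervals indexed $1,\dots,i$ realize only the $K_i+1$ distinct exponent values $0,1,\dots,K_i$, each shared by at most $2\mathcal N_\tau-2$ subintervals, so $i\le(2\mathcal N_\tau-2)(K_i+1)$, that is, $K_i\ge i/(2\mathcal N_\tau-2)-1$.

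For the ratio estimate, fix $i$ and $\tau\in[I_i,I_{i-1}]$; then $\tau-I_i\le I_{i-1}-I_i=L_i$ while $t_n-I_i=(t_n-I_{i-1})+L_i$, so it is enough to show $L_i\le t_n-I_{i-1}=h+\sum_{j=1}^{i-1}L_j$, equivalently, after dividing by $h$, $(\mathcal N_\tau)^{K_i}\le 1+\sum_{j=1}^{i-1}(\mathcal N_\tau)^{K_j}$. Since each value $v\in\{0,1,\dots,K_i-1\}$ occurs among $K_1,\dots,K_{i-1}$ at least $\mathcal N_\tau-1$ times,
\[
\sum_{j=1}^{i-1}(\mathcal N_\tau)^{K_j}\ \ge\ (\mathcal N_\tau-1)\sum_{v=0}^{K_i-1}(\mathcal N_\tau)^{v}\ =\ (\mathcal N_\tau)^{K_i}-1 ,
\]
and adding $1$ yields the claim; the case $K_i=0$ is just $1\le 1$.

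The two counting arguments are routine; the point that needs care is that Property~4 as stated controls the multiplicity of a level only when that level is not the coarsest one, so in the lower bound for $K_i$ I must appeal to the merging rule of Algorithm~1 directly in order to bound the number of subintervals at the coarsest level $K_{\mathcal M_n}$ by $2\mathcal N_\tau-2$ as well. I would also stress that in the ratio estimate the multiplicity $\mathcal N_\tau-1$ at each level is exactly what is required: the telescoping identity $(\mathcal N_\tau-1)\sum_{v=0}^{K_i-1}(\mathcal N_\tau)^v=(\mathcal N_\tau)^{K_i}-1$ would break down for $\mathcal N_\tau\ge 3$ if one only knew that each level below $K_i$ is present at least once.
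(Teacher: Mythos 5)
Your proof is correct and follows essentially the same route as the paper's: the two-sided count of subintervals per level (at least ${\cal N}_{\tau}-1$ and at most $2{\cal N}_{\tau}-2$ at each exponent value) yields the bounds on $K_i$, and the telescoping identity $({\cal N}_{\tau}-1)\sum_{v=0}^{K_i-1}({\cal N}_{\tau})^{v}=({\cal N}_{\tau})^{K_i}-1$ gives $t_n-I_{i}\geq 2\,(I_{i-1}-I_{i})$, hence the ratio bound. You also supply one detail the paper leaves implicit, namely that the coarsest level likewise carries at most $2{\cal N}_{\tau}-2$ subintervals because the optimizing step never leaves $2{\cal N}_{\tau}-1$ equal-length neighbours.
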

\begin{proof}
From the second and fourth properties listed above, we obtain that $
\big({\cal N}_{\tau}\big)^{i/(2{\cal N}_{\tau}-2)-1}\leq
\frac{I  _{i-1}-I  _{i}}{h}=\big({\cal N}_{\tau}\big)^{K_i}\leq \big({\cal N}_{\tau}\big)^{(i-1)/({{\cal N}_{\tau}}-1)}$ holds for $i\geq1$,
which shows the first inequality in (\ref{eq:tem:1}) holds.
From the properties 1, 2, and 4, we get
\begin{equation}
\frac{t_{n}-I  _{i}}{h}
=1+\sum\limits_{k=1}^{i}\big({\cal N}_{\tau}\big)^{K_k}\geq1+\big({\cal N}_{\tau}\big)^{K_i}+({\cal N}_{\tau}-1)\sum\limits_{k=0}^{K_i-1}\big({\cal N}_{\tau}\big)^{k}=2\big({\cal N}_{\tau}\big)^{K_i},
\end{equation}
which implies $\frac{\tau-I  _{i}}{t_n-I  _{i}}\leq\frac 12$ holds for any $\tau\in[I  _{i},I  _{i-1}]$.
\end{proof}
\begin{lem}\label{lem:2.2}
At the $n$-th time step, the length of the vector $\mathbf U^n  $ satisfies 
\begin{equation}
({\cal N}_{\tau}-1)\big(\log_{{\cal N}_{\tau}}{n}-1\big)\leq {\cal M}_n\leq2({\cal N}_{\tau}-1)\log_{{\cal N}_{\tau}}\Big(\frac{n+1}{2}\Big).
\end{equation}
\end{lem}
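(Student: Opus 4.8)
The plan is to convert the multiscale structure of the FAOM subintervals into the two matching logarithmic estimates; the two ingredients are the exact tiling of $[0,t_{n-1}]$ by the subintervals and the pointwise bounds on the exponents $K_i$ from Lemma~\ref{lem:1}. First I would record the identity that drives everything: since $\cup_{i=1}^{{\cal M}_n}[I_i,I_{i-1}]=[0,t_{n-1}]$ with $I_0=t_{n-1}$, $I_{{\cal M}_n}=0$, and $I_{i-1}-I_i=h({\cal N}_{\tau})^{K_i}$ (Properties~1 and~3), adding the subinterval lengths gives
\[
\sum_{i=1}^{{\cal M}_n}({\cal N}_{\tau})^{K_i}=\frac{t_{n-1}}{h}=n-1 .
\]
Grouping this sum by the common value $j$ of the exponent, and writing $m_j$ for the number of subintervals of length $h({\cal N}_{\tau})^j$, one has ${\cal M}_n=\sum_j m_j$, $\,n-1=\sum_j m_j({\cal N}_{\tau})^j$, and, by Property~4 together with the optimization step of Algorithm~1, $1\le m_j\le 2{\cal N}_{\tau}-2$ for each occurring $j$, with $m_j\ge{\cal N}_{\tau}-1$ whenever $j$ is not the largest occurring value $p$.

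For the lower bound I would substitute $K_i\le(i-1)/({\cal N}_{\tau}-1)$ (Lemma~\ref{lem:1}) into the identity and sum a geometric series: with $r:=({\cal N}_{\tau})^{1/({\cal N}_{\tau}-1)}$,
\[
n-1=\sum_{i=1}^{{\cal M}_n}({\cal N}_{\tau})^{K_i}\le\sum_{i=1}^{{\cal M}_n}r^{\,i-1}=\frac{r^{{\cal M}_n}-1}{r-1},
\]
so $r^{{\cal M}_n}\ge(n-1)(r-1)+1$ and hence ${\cal M}_n\ge\log_r\!\big((n-1)(r-1)+1\big)=({\cal N}_{\tau}-1)\log_{{\cal N}_{\tau}}\!\big((n-1)(r-1)+1\big)$, using $\log_r x=({\cal N}_{\tau}-1)\log_{{\cal N}_{\tau}}x$. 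It then remains to check the elementary inequality $(n-1)(r-1)+1\ge n/{\cal N}_{\tau}$ for $n\ge2$: since $r\le2$ (equivalently ${\cal N}_{\tau}\le 2^{{\cal N}_{\tau}-1}$) and $r\ge1+1/{\cal N}_{\tau}$ (equivalently ${\cal N}_{\tau}\ge(1+1/{\cal N}_{\tau})^{{\cal N}_{\tau}-1}$), both valid for ${\cal N}_{\tau}\ge2$, one has $n\,(r-1-1/{\cal N}_{\tau})\ge0\ge r-2$, which rearranges to precisely this. This yields ${\cal M}_n\ge({\cal N}_{\tau}-1)\log_{{\cal N}_{\tau}}(n/{\cal N}_{\tau})=({\cal N}_{\tau}-1)(\log_{{\cal N}_{\tau}}n-1)$.

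For the upper bound I would argue through the level counts $m_j$. Set $x_j:=m_j/(2{\cal N}_{\tau}-2)\in(0,1]$; convexity of $t\mapsto({\cal N}_{\tau})^t$ on $[0,1]$ gives $({\cal N}_{\tau})^{x_j}\le 1+({\cal N}_{\tau}-1)x_j$, hence
\[
({\cal N}_{\tau})^{{\cal M}_n/(2{\cal N}_{\tau}-2)}=\prod_{j=0}^{p}({\cal N}_{\tau})^{x_j}\le\prod_{j=0}^{p}\big(1+({\cal N}_{\tau}-1)x_j\big)\le1+({\cal N}_{\tau}-1)\sum_{j=0}^{p}x_j({\cal N}_{\tau})^j=\frac{n+1}{2},
\]
the last equality because $({\cal N}_{\tau}-1)\sum_j x_j({\cal N}_{\tau})^j=\tfrac12\sum_j m_j({\cal N}_{\tau})^j=\tfrac12(n-1)$. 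The one genuinely new step here is the product-versus-sum inequality $\prod_{j=0}^p\big(1+({\cal N}_{\tau}-1)x_j\big)\le1+({\cal N}_{\tau}-1)\sum_{j=0}^p x_j({\cal N}_{\tau})^j$ for $x_j\in[0,1]$, which I would prove by induction on $p$: peeling off the factor $1+({\cal N}_{\tau}-1)x_p$ of largest weight reduces the step to $1+({\cal N}_{\tau}-1)\sum_{j<p}x_j({\cal N}_{\tau})^j\le1+\sum_{j<p}({\cal N}_{\tau})^j=({\cal N}_{\tau})^p$, which is immediate from $x_j\le1$. Taking $\log_{{\cal N}_{\tau}}$ in the display gives ${\cal M}_n\le2({\cal N}_{\tau}-1)\log_{{\cal N}_{\tau}}\frac{n+1}{2}$.

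The step I expect to be the main obstacle is producing the right constants, i.e.\ resisting the temptation to simply evaluate Lemma~\ref{lem:1} at $i={\cal M}_n$: that loses an additive term of order $({\cal N}_{\tau}-1)\log_{{\cal N}_{\tau}}2$ and fails already for ${\cal N}_{\tau}=3$. The real content is therefore (i) summing the \emph{full} geometric tail in the lower bound, which rests on the two elementary bounds $r\le2$ and $r\ge1+1/{\cal N}_{\tau}$, and (ii) the product-versus-sum lemma in the upper bound, which is precisely what allows the per-level cap $m_j\le2{\cal N}_{\tau}-2$ to be exploited without slack; both inequalities are in fact attained (for example at $n=2$ and at $n=2({\cal N}_{\tau})^{p+1}-1$), so no cruder argument can succeed.
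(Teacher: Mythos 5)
Your proof is correct, and it rests on the same foundation as the paper's: the tiling identity $n-1=\sum_{i=1}^{{\cal M}_n}({\cal N}_{\tau})^{K_i}$ together with the per-level multiplicity bounds ${\cal N}_{\tau}-1\le m_j\le 2{\cal N}_{\tau}-2$ encoded in Properties 2 and 4 (equivalently, Lemma \ref{lem:1}). The difference is purely in how that sum is compared to a geometric series. The paper groups the ${\cal M}_n$ terms into integer blocks of size ${\cal N}_{\tau}-1$ (lower bound) and $2{\cal N}_{\tau}-2$ (upper bound), arriving at $n-1\le({\cal N}_{\tau})^{{\cal K}+1}-1$ and $n-1\ge 2\big[({\cal N}_{\tau})^{{\cal Y}+1}-1\big]$ with ${\cal K},{\cal Y}$ the block counts; you instead sum the full geometric series with the real ratio $r=({\cal N}_{\tau})^{1/({\cal N}_{\tau}-1)}$ and recover the stated constant from the elementary bounds $1+1/{\cal N}_{\tau}\le r\le 2$, and for the upper bound you prove a product-versus-sum inequality from the convexity estimate $({\cal N}_{\tau})^{x}\le 1+({\cal N}_{\tau}-1)x$ on $[0,1]$. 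Your route is slightly longer but handles the rounding more carefully: the paper's displayed inequality $n-1\ge(2{\cal N}_{\tau}-2)\sum_{i=0}^{{\cal Y}}({\cal N}_{\tau})^{i}$ is exact only when ${\cal M}_n$ is a multiple of $2{\cal N}_{\tau}-2$ (otherwise the top block is only partially filled), and your convexity step is precisely the estimate needed to close that case without weakening the constant $\frac{n+1}{2}$. The one quibble is the closing claim that both bounds are ``attained'' at $n=2$: there ${\cal M}_2=1$ while the lower bound equals $({\cal N}_{\tau}-1)(\log_{{\cal N}_{\tau}}2-1)$, which is $0$ for ${\cal N}_{\tau}=2$; this is harmless meta-commentary and does not affect the argument.
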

\begin{proof}
At the $n$-th time step, it is clear $I  _1=t_{n-2}$ and $I  _{{\cal M}_n}=0$.
Let $[a_{i+1},a_{i}]$ be a sequence of intervals with $a_i=I  _{i}/h$ for $i=0,1,\cdots,{\cal M}_{n}$.
According to the properties listed above, we have
\begin{equation}
\begin{aligned}
n-1=a_0-a_{{\cal M}_n}=\sum\limits_{i=0}^{{\cal M}_n-1}(a_i-a_{i+1})\leq\sum\limits_{i=0}^{{\cal K}}({\cal N}_{\tau}-1)\big({\cal N}_{\tau}\big)^{i}=\big({\cal N}_{\tau}\big)^{{\cal K}+1}-1,
\end{aligned}
\end{equation}
and the lower bound of  ${\cal M}_n$ is given by ${\cal M}_n\geq ({\cal N}_{\tau}-1)(\log_{{\cal N}_{\tau}} n-1)$. Here ${\cal K}$ is an 
integer such that $\frac{{\cal M}_n}{{\cal N}_{\tau}-1}\in({\cal K},{\cal K}+1]$. On the other hand, we get
\begin{equation}
\begin{aligned}
n-1=a_0-a_{{\cal M}_n}=\sum\limits_{i=0}^{{\cal M}_n-1}(a_i-a_{i+1})\geq (2{\cal N}_{\tau}-2)\sum\limits_{i=0}^{{\cal Y}}\big({\cal N}_{\tau}\big)^{i}
=2[\big({\cal N}_{\tau}\big)^{{\cal Y}+1}-1],
\end{aligned}
\end{equation}
where ${\cal Y}$ is an
integer such that $\frac{{\cal M}_n}{2{\cal N}_{\tau}-2}\in({\cal Y},{\cal Y}+1]$. Then the upper bound of ${\cal M}_n$ is 
given by
\begin{equation}
\begin{aligned}
{\cal M}_n\leq 2({\cal N}_{\tau}-1)\log_{{\cal N}_{\tau}}\Big(\frac{n+1}{2}\Big),
\end{aligned}
\end{equation}
which completes our proof.
\end{proof}

In the FAOM method, we can choose a small integer number ${\cal N}_{\tau}$ to control the storage of memory.
Usually,  ${\cal N}_{\tau}$ is set to be 2 or 3.
As compared with the $L1$ approximation, the FAOM method reduces the storage requirement from ${ O}(N_T)$ to ${ O}({\cal N}_{\tau}
\log _{{\cal N}_{\tau}}N_T)$ and the total
computational cost from ${ O}(N_T^2)$ to ${ O}(N_T{\cal N}_{\tau}
\log _{{\cal N}_{\tau}}N_T)$.
Furthermore, the FAOM method will reduce to the $L1$ approximation while  ${\cal N}_{\tau}>N_T$. However,
the numerical results in the next section show that the convergence order of the FAOM goes to zero while $h\rightarrow 0$. 
To improve the FAOM method,
let us go back to equation (\ref{eq:al-1}), in which the history part is approximated as
\begin{equation}
\begin{aligned}
{\cal I}_h(t_n)\approx\sum\limits_{i=1}^{{\cal M}_n} \frac{1}{I_{i-1}-I_{i}}\int\limits_{I_i}^{I_{i-1}}\big(\Pi_{J,h} u(\tau)\big)'\txn 
d \tau\int\limits_{I_i}^{I_{i-1}}\frac{1}{(t_n-\tau)^{\alpha}}\txn d \tau.
\end{aligned}
\end{equation}
In the above formula, $u'(\tau)$ is approximated by a constant function on the subinterval  $[I_i,I_{i-1}]$.
The error of this approximation is dependent on the length of the subinterval  $[I_i,I_{i-1}]$. Since the length of subinterval  $[I_{{\cal M}_n},I_{{\cal M}_{n-1}}]$
dos not go to zeros as $h \rightarrow 0$, the error of the FAOM method with small ${\cal N}_{\tau}$ may not convergent to  zero as $h\rightarrow 0$.
Actually, we can rewrite equation (\ref{eq:al-1}) as follows
\begin{equation}\label{eq:22-1}
\begin{aligned}
\int\limits_0^{t_n}\frac{ u'(\tau)}{(t_n-\tau)^{\alpha}}\txn d \tau& 
\approx\int\limits_{t_{n-1}}^{t_n}\frac{\big(\Pi_{J,h} u(\tau)\big)'}{(t_n-\tau)^{\alpha}}\txn d \tau+\sum\limits_{i=1}^{{\cal M}_n} 
c_i^0\int\limits_{I_i}^{I_{i-1}}\big(\Pi_{J,h} u(\tau)\big)'\txn 
d \tau,
\end{aligned}
\end{equation}
where $c_i^0=\frac{1}{I_{i-1}-I_i}\int\limits_{I_i}^{I_{i-1}}\frac{1}{(t_n-\tau)^{\alpha}}\txn d \tau$. In equation (\ref{eq:22-1}),
$ u(\tau)$ is approximated by $\Pi_{J,h} u(\tau)$, and the kernel function is replaced by
a constant  $c_i^0$ on the subinterval  $[I_i,I_{i-1}]$.
The difference between the two understandings of the FAOM  is shown in Fig.~\ref{fig:1-1} (c-d)
by a simple example.
As shown in Fig.~\ref{fig:1-1}-(d), the error between the piecewise constant function and the kernel function does not go
to zero as $h \rightarrow 0$. 

To improve the FAOM method, we introduce a more accurate approximation for the kernel function,
which is based on a polynomial approximation of
the special function $\frac 1 {(1-\tau)^{\alpha}}$ on the interval $[-\frac 1 3,\frac 1 3]$.
We denote the new method as the high order fast algorithm with optimum memory 
based on a $K$-th  degree polynomial approximation  (FAOM-P$K$).
Suppose the function $\frac 1 {(1-\tau)^{\alpha}}$ is approximated by a polynomial function
$\sum\limits_{i=0}^K w_i\tau^i$.
Let $\epsilon_K$ be the absolute error of the approximation, which is defined as follows
\begin{equation}\label{eq:216}
\Big|\frac{1}{(1-\tau)^{\alpha}}-\sum\limits_{i=0}^{K}w_i\tau^i\Big|\leq \epsilon_K,\quad \tau\in[-\frac 1 3,\frac 1 3].
\end{equation}
Next, we propose the  FAOM-P$K$ method based on the 
polynomial approximation.  
After replacing $u'(\tau)$ by $\big(\Pi_{J,h}u(\tau)\big)'$,
the Caputo fractional derivative  is approximated as
\begin{equation}\label{eq:FEOM-pk}
\begin{aligned}
^C_0{\cal D}_t^\alpha u(t)
=\frac{1}{\Gamma(1-\alpha)}\int\limits_{t_{n-1}}^{t_{n}}\frac{\big(\Pi_{J,h} u(\tau)\big)'}{(t_n-\tau)^{\alpha}}\txn d \tau+
\frac{1}{\Gamma(1-\alpha)}\int\limits_0^{t_{n-1}}\frac{\big(\Pi_{J,h} u(\tau)\big)'}{(t_n-\tau)^{\alpha}}\txn d \tau+{\cal R}_{J,h},
\end{aligned}
\end{equation}
where ${\cal R}_{J,h}$ is the truncation error according to the polynomial approximation of $ u(\tau)$. 
Similar to the FAOM method, we decompose the second integral in the right hand of the above equation into several parts as follows
\begin{equation}\label{eq:217-0}
\begin{aligned}
\frac1{\Gamma(1-\alpha)}\int\limits_0^{t_{n-1}}\frac{\big(\Pi_{J,h} u(\tau)\big)'}{(t_n-\tau)^{\alpha}}\txn d \tau=\frac1{\Gamma(1-\alpha)}
\sum\limits_{i=0}^{{\cal M}_n-1}\int\limits_{I_{i+1}  }^{I_{i}  }\frac{\big(\Pi_{J,h} u(\tau)\big)'}{(t_n-\tau)^{\alpha}} \txn d \tau.
\end{aligned}
\end{equation}
By setting $I_{i+\frac 1 2}  =(I_{i}  +I_{i+1}  )/2$ and $\bar \tau=\tau- I_{i+\frac 1 2}  $,
the $(i+1)$-th term in the right hand of (\ref{eq:217-0})  can be rewritten as
\begin{equation}\label{eq:217-1}
\begin{aligned}
\frac1{\Gamma(1-\alpha)}\int\limits_{I_{i+1}  }^{I_{i}  }\frac{\big(\Pi_{J,h} u(\tau)\big)'}{(t_n-\tau)^{\alpha}} \txn d \tau=
\frac1{\Gamma(1-\alpha)}
\int\limits_{I_{i+1}  -I_{i+\frac 1 2}  }^{I_{i}  -I_{i+\frac 1 2}  }
\frac{\big(\Pi_{J,h} u(I_{i+\frac 1 2}  +\bar \tau)\big)'}{(t_n-I_{i+\frac 1 2}  -\bar \tau)^{\alpha}} \txn d \bar \tau.
\end{aligned}
\end{equation}
After denoting $\tilde \tau=\frac{\bar \tau}{t_n-I_{i+\frac 1 2}  }$, the kernel function $ (t_n-I_{i+\frac 1 2}  
-\bar \tau)^{-\alpha}$ in (\ref{eq:217-1}) is equal to $(t_n-I_{i+\frac 1 2}  )^{-\alpha}\frac 1 {(1-\tilde \tau)^\alpha} $.
Thanks to Lemma \ref{lem:1}, we have $-\frac 1 3\leq\tilde \tau\leq\frac 1 3$ holds for all $\tau\in[I_{i+1}  ,I_{i}  ]$.
Using the polynomial approximation of the function $\frac{1}{(1-\tilde \tau)^\alpha}$,
the $(i+1)$-th term in the right hand of (\ref{eq:217-0})  can be aprroximated as 
\begin{equation}\label{eq:217-2}
\begin{aligned}
\frac1{\Gamma(1-\alpha)}\int\limits_{I_{i+1}  }^{I_{i}  }\frac{\big(\Pi_{J,h} u(\tau)\big)'}{(t_n-\tau)^{\alpha}} \txn d \tau=
\sum\limits_{k=0}^{K} \frac{\bar w_k^i}{\Gamma(1-\alpha)}\int\limits_{I_{i+1}  }^{I_{i}  }{\big(\Pi_{J,h}u(\tau)\big)'}
\Big(\frac{\tau-I_{i+\frac 1 2}  }{I_{i}  -I_{i+\frac 1 2}  }\Big)^k \txn d \tau+{\cal R}^i_K,
\end{aligned}
\end{equation}
where $\bar w_k^i=w_k\frac{\big(I_i  -I_{i+\frac12}  \big)^k}
{\big(t_n-I_{i+\frac 12}  \big)^{k+\alpha}}$ and ${\cal R}^i_K$ denotes the cut off error according to the polynomial approximation of $\frac 1 {(1-\tilde \tau)^\alpha}$.
By combining (\ref{eq:FEOM-pk}), (\ref{eq:217-0}), and (\ref{eq:217-2}), 
the numerical scheme of the Caputo fractional derivative is finally given as follows
\begin{equation}\label{eq:217}
\begin{aligned}
^C_0{\cal D}_t^\alpha u(t)&=\int\limits_0^{t_{n}}\frac{ u'(\tau)}{(t_n-\tau)^{\alpha}}\txn d \tau=\frac{1}{\Gamma(1-\alpha)}\int\limits_{t_{n-1}}^{t_{n}}\frac{\big(\Pi_{J,h} u(\tau)\big)'}{(t_n-\tau)^{\alpha}}\txn d \tau
\\&+\sum\limits_{i=0}^{{\cal M}_n-1}\sum\limits_{k=0}^{K}\frac{\bar w_k^i}{\Gamma(1-\alpha)}\int\limits_{I_{i+1}  }^{I_{i}  }\big(\Pi_{J,h} u(\tau)\big)'
\Big(\frac{\tau-I_{i+\frac 1 2}  }{I_{i}  -I_{i+\frac 1 2}  }\Big)^k \txn d \tau+{\cal R}_K+{\cal R}_{J,h}\\
&:=^C_0{\cal D}^{F,\alpha}_tu(t)+{\cal R}_K+{\cal R}_{J,h},
\end{aligned}
\end{equation}
where $^C_0{\cal D} ^{F,\alpha}_t u(t)$ presents the numerical Caputo fractional derivative calculated by the FAOM-P$K$ method.
Here 
${\cal R}_K=\sum\limits_{i=0}^{{\cal M}_n-1}{\cal R}_K^i$
denotes the total truncation error according to the polynomial approximation of $\frac 1 {(1-\tilde \tau)^\alpha}$.

In the FAOM-P$K$ method, we save  $\int\limits_{I_{i+1}  }^{I_{i}  }\big(\Pi_{J,h} u(\tau)\big)'
\Big(\frac{\tau-I_{i+1/ 2}  }{I_{i}  -I_{i+1 /2}  }\Big)^k\txn d\tau$, $k=0,1,$ $\cdots,\,K$ at each time step. 
The total memory requirement in the  FEOM-P$K$ method is $
{O}\big((K+1){\cal N}_{\tau}\log_{{\cal N}_{\tau}}n\big)$ at the $n$-th time step.
During the optimizing step, the ${\cal N}_\tau$ subintervals with the same length is combined to a large one.
At the same time, the corresponding integrals we saved also need to be combined.
In the case of $I_{i}  -I_{i+1}  =I_{i+1}  -I_{i+2}  =\cdots=I_{i+{\cal N}_{\tau}-1} 
-I_{i+{\cal N}_{\tau}}  $, the integrals on the subinterval $[I_{i+{\cal N}_\tau},I_{i}]$ can be decomposed as follows
\begin{equation}\label{eq:22-20}
\begin{aligned}
&\int\limits_{I_{i+{\cal N}_{\tau}}  }^{I_{i}  }\big(\Pi_{J,h} u(\tau)\big)'
\Big(\frac{\tau-I_{i+{\cal N}_{\tau}/2}  }{I_{i}  -I_{i+{\cal N}_{\tau}/2}  }\Big)^k \txn d \tau
=\sum\limits_{j=0}^{{\cal N}_{\tau}-1}\int\limits_{I_{i+j+1}  }^{I_{i+j}  }\big(\Pi_{J,h} u(\tau)\big)'
\Big(\frac{\tau-I_{i+{\cal N}_{\tau}/2}  }{I_{i}  -I_{i+{\cal N}_{\tau}/2}  }\Big)^k \txn d \tau
\\&\quad=\sum\limits_{j=0}^{{\cal N}_{\tau}-1}\int\limits_{I_{i+j+1}  }^{I_{i+j}  }\big(\Pi_{J,h} u(\tau)\big)'
\Big(\frac{\tau-I_{i+j+\frac 12}  }{{\cal N}_{\tau}\big(I_{i+j}  -I_{i+j+\frac 1 2}  \big)}
-\frac{2j+1-{\cal N}_{\tau}}{{\cal N}_{\tau}}\Big)^k \txn d \tau\\
&\quad=\sum\limits_{j=0}^{{\cal N}_{\tau}-1}\sum\limits_{l=0}^k\frac{C^l_k}{{\cal N}_{\tau}^k}
\big({\cal N}_{\tau}-2j-1\big)^{k-l}
\int\limits_{I_{i+j+1}  }^{I_{i+j}  }\big(\Pi_{J,h} u(\tau)\big)'
\Big(\frac{\tau-I_{i+j+\frac 12}  }{I_{i+j}  -I_{i+j+\frac 1 2}  }
\Big)^l \txn d \tau,
\end{aligned}
\end{equation}
which gives a rule for optimizing the  storage.  The FAOM-P$K$ algorithm is finally given in Algorithm 2.

\noindent
\begin{minipage}[t]{1.\textwidth}
\noindent
\underline{{~~~~~~~~~~~~~~~~~~~~~~~~~~~~~~~~~~~~~~~~~~~~~~~~~~~~~~~~~~~~~~~~~~~~~~~~~~~~~~~~~~~~~~~~~~~~~~~~~~~~~~~~~~~~~~~}}

\noindent\textbf{Algorithm 2}: FAOM-P$K$
of the Caputo fractional derivative. 
\vspace{-0.3cm}

\noindent\underline{{~~~~~~~~~~~~~~~~~~~~~~~~~~~~~~~~~~~~~~~~~~~~~~~~~~~~~~~~~~~~~~~~~~~~~~~~~~~~~~~~~~~~~~~~~~~~~~~~~~~~~~~~~~~~~~~}}

\vspace{0.15cm}
\noindent\textbf{Initialization:} 
Let $\mathbf I^n=[I^n_1,I^n_2,\cdots,I^n_{{\cal M}_n}]  $ be a vector, whose elements are the starts of subintervals, 
and $\mathbf U^{n,k}=[U^{n,k}_1,U^{n,k}_2,\cdots,U^{n,k}_{{\cal M}_n}]$ with $k=0,1,\cdots,K$, be vectors, whose elements are 
$\int\limits_{I_{i}  }^{I_{i-1}  }\big(\Pi_{J,h} u(\tau)\big)'
\Big(\frac{\tau-I_{i-1/ 2}  }{I_{i-1}  -I_{i-1 /2}  }\Big)^k\txn d\tau$, respectively. Set $\mathbf U^{0,k}=0$
  and $\mathbf I^0  =0$. Pre-chose an integer number ${\cal N}_{\tau}\geq 2$ to control the storage of memory. 
  
\noindent\textbf{Start time loop:} $t_n=nh$ with $n=2,\,3,\cdots,N_T$.
\begin{description}
  \item[~~Step 1] (Updating the storage):  Update the temporary vector 
  $\tilde{\mathbf I}^n  ={\cal B}(t_{n-2},\mathbf I^{n-1}  )$ and the temporary storage vectors by  $\tilde{\mathbf U}^{n,k}={\cal B}\Big(
  \int\limits_{t_{n-2}}^{t_{n-1}}\big(\Pi_{J,h} u(\tau)\big)'
\Big(\frac{\tau-t_{n-3/2}}{h/2}\Big)^k\txn d\tau,\mathbf U^{n-1,k}\Big)$.
    \item[~~Step 2] (Optimizing of the storage):  Obtain the storage vectors $\mathbf{U}^{n,k}$ and vector $\mathbf{I}^n$ as follows.   
  \end{description}
\begin{itemize}
  \item If there exists $i_0$ such that $\tilde I^n  _{i_0}-\tilde I^n  _{i_0+1}=\cdots=
   \tilde I ^n_{i_0+2{\cal N}_{\tau}-2}-\tilde I^n  _{i_0+2{\cal N}_{\tau}-1}$, let 
  $\mathbf U^n  ={\cal F}({\cal N}_{\tau},i_0+{\cal N}_{\tau}, v,
   \tilde{\mathbf U}^n  )$ 
   and $\mathbf I^n  ={\cal F}({\cal N}_{\tau},i_0+{\cal N}_{\tau},
   \tilde{\mathbf I}^n  )$. Here $I ^n _0=\tilde I^n_0=t_{n-1}$
   and $v=\int\limits_{\tilde I_{i_0+2{\cal N}_\tau-1}  }^{\tilde I_{i_0+{\cal N}_\tau-1}  }\big(\Pi_{J,h} u(\tau)\big)'
\Big(\frac{\tau-I_{i-1/ 2}  }{I_{i-1}  -I_{i-1 /2}  }\Big)^k\txn d\tau$ is calculated according to (\ref{eq:22-20}).
   Set $\tilde {\mathbf I}^n=\mathbf{I}^n$, $\tilde {\mathbf U}^{n,k}=\mathbf{U}^{n,k}$ and redo  optimization until there does not exist $i_0$ such that $I^n  _{i_0}-I^n  _{i_0+1}=\cdots=
   I^n  _{i_0+2{\cal N}_{\tau}-2}-I^n  _{i_0+2{\cal N}_{\tau}-1}$ are satisfied.
  \end{itemize}
  \begin{description}
  \item[~~Step 3] (Calculating the Caputo fractional derivative):  
   Obtain the numerical Caputo fractional derivative according to (\ref{eq:217}).
  \end{description}

\noindent\textbf{End of time loop}.

\vspace{-0.2cm}
\noindent\underline{{~~~~~~~~~~~~~~~~~~~~~~~~~~~~~~~~~~~~~~~~~~~~~~~~~~~~~~~~~~~~~~~~~~~~~~~~~~~~~~~~~~~~~~~~~~~~~~~~~~~~~~~~~~~~~~~}}
\vspace{0.2cm}
\end{minipage}

The truncation error of the FAOM-P$K$ algorithm can be decomposed into two parts,  which are 
${\cal R}_K$ and ${\cal R}_{J,h}$. 
Here ${\cal R}_K$  is the total truncation
error according to the polynomial approximation of the kernel function
and ${\cal R}_{J,h}$ is the truncation error according to the polynomial interpolation of $u(t)$.
It is worth to pointing out that the two parts
are independent. The estimate of the truncation error ${\cal R}_{J,h}$, according to the $L$1 or $L1-2$ polynomial interpolation of $ u(t)$, 
can be founded in  \cite{gao14, sun06}.
The following Lemma, which can be found in \cite{sun06},
establishes an error bound for the $L$1 formula.
\begin{lem}
Suppose $u(t)\in C^2[0,t_{n}]$. For any $\alpha$ $(0<\alpha<1)$, then
\begin{equation}
\begin{aligned}
\big|{\cal R}_{1,h}\big|&
&\leq \frac{1}{1-\alpha}\Big[\frac{1-\alpha}{12}+\frac{2^{2-\alpha}}{2-\alpha}-(1+2^{-\alpha})\Big]
\max\limits_{0\leq t\leq t_{n}}\big|u''(t)\big|h^{2-\alpha}.
\end{aligned}
\end{equation}
\end{lem}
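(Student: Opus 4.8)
The plan is to follow the classical $L1$ truncation-error argument of Sun and Wu \cite{sun06}. Since $\delta_t u^{k-\frac12}=\bigl(\Pi_{1,h}u(\tau)\bigr)'$ is constant on $[t_{k-1},t_k]$ and equals $\frac1h\int_{t_{k-1}}^{t_k}u'(\tau)\,\txn d\tau$, the truncation error splits into per-subinterval pieces,
\begin{equation}
{\cal R}_{1,h}=\frac{1}{\Gamma(1-\alpha)}\sum_{k=1}^{n}r_k,\qquad r_k:=\int_{t_{k-1}}^{t_k}\frac{\bigl(u-\Pi_{1,h}u\bigr)'(\tau)}{(t_n-\tau)^{\alpha}}\,\txn d\tau,
\end{equation}
and it then suffices to bound each $r_k$ by $\max_{0\le t\le t_n}|u''(t)|$ times an explicit quantity and add up.

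First I would integrate by parts on each subinterval to move the derivative off $u-\Pi_{1,h}u$ and onto the kernel. The boundary terms vanish because $u-\Pi_{1,h}u$ is zero at the nodes; on $[t_{n-1},t_n]$, where $(t_n-\tau)^{-\alpha}$ is singular at $\tau=t_n$, this is still legitimate since there $u-\Pi_{1,h}u=O(t_n-\tau)$ while the kernel is only $O\bigl((t_n-\tau)^{-\alpha}\bigr)$ with $0<\alpha<1$ --- this is exactly where the hypothesis $0<\alpha<1$ (kernel integrability) is used. One is left with $r_k=-\alpha\int_{t_{k-1}}^{t_k}(u-\Pi_{1,h}u)(\tau)\,(t_n-\tau)^{-\alpha-1}\,\txn d\tau$. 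Inserting the interpolation remainder $u(\tau)-\Pi_{1,h}u(\tau)=\frac12 u''(\xi_\tau)(\tau-t_{k-1})(\tau-t_k)$, whose polynomial factor keeps a fixed sign on $[t_{k-1},t_k]$, and rescaling $\tau=t_{k-1}+h\theta$ gives
\begin{equation}
|r_k|\le \frac{\alpha}{2}\,h^{2-\alpha}\max_{0\le t\le t_n}|u''(t)|\int_0^1\theta(1-\theta)\,(n-k+1-\theta)^{-\alpha-1}\,\txn d\theta .
\end{equation}

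It remains to sum over $k$ and collect the constant. I would evaluate the Beta-type integrals for the subintervals closest to $t_n$ exactly --- $\int_0^1\theta(1-\theta)^{-\alpha}\txn d\theta$ for $[t_{n-1},t_n]$ and $\int_0^1\theta(1-\theta)(2-\theta)^{-\alpha-1}\txn d\theta$ for $[t_{n-2},t_{n-1}]$, which is where the terms $\frac{2^{2-\alpha}}{2-\alpha}$ and $1+2^{-\alpha}$ originate --- and for the remaining subintervals use a crude bound (e.g.\ $\theta(1-\theta)\le\frac14$ together with $\int_0^1(n-k+1-\theta)^{-\alpha-1}\txn d\theta=\frac1\alpha[(n-k)^{-\alpha}-(n-k+1)^{-\alpha}]$), so that the tail telescopes to a quantity bounded by $1$ independently of $n$ and contributes the $\frac{1-\alpha}{12}$ piece. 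Since every $|r_k|$ bound is nonnegative and the telescoped tail is uniform in $n$, assembling the pieces (and using $\Gamma(2-\alpha)=(1-\alpha)\Gamma(1-\alpha)$ to turn the prefactor into $\frac1{1-\alpha}$) yields the stated inequality. The main obstacle is precisely this last bookkeeping: one must isolate the correct number of near-singularity subintervals and pair it with just the right crude estimate on the remainder so that the two contributions combine into the exact constant $\frac1{1-\alpha}\bigl[\frac{1-\alpha}{12}+\frac{2^{2-\alpha}}{2-\alpha}-(1+2^{-\alpha})\bigr]$; everything else is routine integration, and since this is a standard estimate one may alternatively simply quote \cite{sun06}.
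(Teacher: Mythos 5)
The paper does not prove this lemma at all --- it is quoted from \cite{sun06} --- so the relevant comparison is with the standard argument there, which is exactly what you reconstruct: per-subinterval splitting, integration by parts with vanishing boundary terms (your justification at the singular endpoint $\tau=t_n$ is the right one), the sign-definite interpolation remainder, exact evaluation of the subintervals nearest $t_n$, and a uniform-in-$n$ bound on the tail. Your identification of the origin of the constant is also correct: multiplying by $\tfrac{\alpha}{2}$ and adding the exact values of $\int_0^1\theta(1-\theta)^{-\alpha}\,\txn d\theta$ (for $k=n$) and $\int_0^1\theta(1-\theta)(2-\theta)^{-\alpha-1}\,\txn d\theta$ (for $k=n-1$) yields precisely $\frac{1}{1-\alpha}\bigl[\frac{2^{2-\alpha}}{2-\alpha}-(1+2^{-\alpha})\bigr]$.

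The one place the bookkeeping genuinely misfires is the tail, and it is not just a matter of taste: with $\theta(1-\theta)\le\frac14$ and the telescoping identity you quote, the $k\le n-2$ contribution is
\begin{equation*}
\frac{\alpha}{2}\cdot\frac14\cdot\frac1\alpha\bigl[2^{-\alpha}-n^{-\alpha}\bigr]\le\frac{2^{-\alpha}}{8},
\end{equation*}
which exceeds $\frac1{12}$ whenever $\alpha<\log_2\frac32$, so the stated constant would not follow on that range. The correct pairing is the opposite one: keep $\int_0^1\theta(1-\theta)\,\txn d\theta=\frac16$ exactly and bound the kernel pointwise by $(n-k+1-\theta)^{-\alpha-1}\le(n-k)^{-\alpha-1}$, so that the tail is at most $\frac{\alpha}{2}\cdot\frac16\sum_{j\ge2}j^{-\alpha-1}\le\frac{\alpha}{12}\int_1^\infty s^{-\alpha-1}\,\txn ds=\frac1{12}$, which is exactly the $\frac{1}{1-\alpha}\cdot\frac{1-\alpha}{12}$ term. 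Finally, a normalization remark: as stated the bound carries the prefactor $\frac1{1-\alpha}$ rather than $\frac1{\Gamma(2-\alpha)}$, so ${\cal R}_{1,h}$ here must be read as the error of the unnormalized integral $\int_0^{t_n}(t_n-\tau)^{-\alpha}u'(\tau)\,\txn d\tau$ (at odds with the paper's own definition of ${\cal R}_{J,h}$ in (2.13)); your step invoking $\Gamma(2-\alpha)=(1-\alpha)\Gamma(1-\alpha)$ would produce $\frac1{\Gamma(2-\alpha)}$, which is harmless since $\Gamma(1-\alpha)\ge1$ makes that bound the stronger one, but the sentence should be adjusted to whichever normalization is intended.
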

The truncation error of $L1-2$ formula  is illustrated in the following Lemma, which can be found in \cite{gao14}.
\begin{lem}
Suppose $u(t)\in C^3[0,t_{n}]$. For any $\alpha$ $(0<\alpha<1)$,  then
\begin{equation}
\big|{\cal R}_{2,h}\big|\leq\left\{
\begin{aligned}
&\frac{\alpha}{2\Gamma(3-\alpha)}\max\limits_{0\leq t\leq t_1}\big| u''(t) \big|h^{2-\alpha}, \quad n=1,\\
&\frac1{\Gamma(1-\alpha)}\Big\{\frac\alpha{12}\max\limits_{0\leq t\leq t_1}\big| u''(t) \big|(t_n-t_1)^{-\alpha-1}h^3+\Big[\frac 1 {12}\\
&\quad+\frac{\alpha}{3(1-\alpha)(2-\alpha)}\Big(\frac 1 2+\frac1{3-\alpha}\Big)\Big]
\max\limits_{0\leq t\leq t_n}\big| u'''(t) \big|h^{3-\alpha}\Big\},\quad n\geq2.
\end{aligned}
\right.
\end{equation}
\end{lem}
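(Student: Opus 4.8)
The plan is to write the truncation error in the form
$\mathcal{R}_{2,h}=\frac{1}{\Gamma(1-\alpha)}\int_0^{t_n}\frac{u'(\tau)-(\Pi_{2,h}u(\tau))'}{(t_n-\tau)^{\alpha}}\,d\tau$
and to estimate it by splitting the integration domain into three groups of subintervals: the first subinterval $[t_0,t_1]$ (where $u$ is replaced by the linear interpolant $\Pi_{1,h}u$), the history subintervals $[t_{j-1},t_j]$ with $2\le j\le n-1$, and the local subinterval $[t_{n-1},t_n]$ (the last two using the quadratic interpolant $\Pi_{2,h}u$). On each subinterval $[t_{j-1},t_j]$ with $j\ge 2$ the interpolant matches $u$ at $t_{j-2},t_{j-1},t_j$, so the Lagrange remainder gives $u(\tau)-\Pi_{2,h}u(\tau)=\frac{u'''(\xi_\tau)}{6}(\tau-t_{j-2})(\tau-t_{j-1})(\tau-t_j)$, while on $[t_0,t_1]$ one has $u(\tau)-\Pi_{1,h}u(\tau)=\frac{u''(\eta_\tau)}{2}(\tau-t_0)(\tau-t_1)$. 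The common device on every subinterval is a single integration by parts, whose boundary terms vanish because $u-\Pi_{J,h}u$ is $C^1$ and zero at the interpolation nodes (and, at the singular endpoint $\tau=t_n$, it is $O(t_n-\tau)$, which dominates $(t_n-\tau)^{-\alpha}$ since $\alpha<1$).

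For $n=1$ only the first term is present. After integration by parts, $\int_{t_0}^{t_1}(u-\Pi_{1,h}u)'(\tau)(t_1-\tau)^{-\alpha}\,d\tau=-\alpha\int_{t_0}^{t_1}(u-\Pi_{1,h}u)(\tau)(t_1-\tau)^{-\alpha-1}\,d\tau$; inserting the remainder, computing $\int_{t_0}^{t_1}(\tau-t_0)(t_1-\tau)^{-\alpha}\,d\tau=\frac{h^{2-\alpha}}{(1-\alpha)(2-\alpha)}$, and using $\Gamma(3-\alpha)=(2-\alpha)(1-\alpha)\Gamma(1-\alpha)$ gives exactly $\frac{\alpha}{2\Gamma(3-\alpha)}\max_{[0,t_1]}|u''|\,h^{2-\alpha}$.

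For $n\ge 2$ I would add the three contributions. (i) On $[t_0,t_1]$ the weight $(t_n-\tau)^{-\alpha}$ is smooth; the same integration by parts, together with $(t_n-\tau)^{-\alpha-1}\le(t_n-t_1)^{-\alpha-1}$ and $\int_{t_0}^{t_1}(\tau-t_0)(t_1-\tau)\,d\tau=\frac{h^3}{6}$, gives the $\frac{\alpha}{12}\max_{[0,t_1]}|u''|\,(t_n-t_1)^{-\alpha-1}h^3$ term. (ii) On $[t_{n-1},t_n]$, after integration by parts one is left with $\alpha\int_{t_{n-1}}^{t_n}|u-\Pi_{2,h}u|\,(t_n-\tau)^{-\alpha-1}\,d\tau$; substituting $s=t_n-\tau$ and evaluating $\int_0^h(2h-s)(h-s)s^{-\alpha}\,ds=h^{3-\alpha}\big(\frac{2}{1-\alpha}-\frac{3}{2-\alpha}+\frac{1}{3-\alpha}\big)=\frac{2h^{3-\alpha}}{(1-\alpha)(2-\alpha)}\big(\frac12+\frac{1}{3-\alpha}\big)$ produces the $\frac{\alpha}{3(1-\alpha)(2-\alpha)}\big(\frac12+\frac{1}{3-\alpha}\big)\max_{[0,t_n]}|u'''|\,h^{3-\alpha}$ term. (iii) On the history subintervals $2\le j\le n-1$, integration by parts gives $-\alpha\sum_j\int_{t_{j-1}}^{t_j}(u-\Pi_{2,h}u)(\tau)(t_n-\tau)^{-\alpha-1}\,d\tau$; bounding $|u-\Pi_{2,h}u|\le\frac{\max|u'''|}{6}\cdot 2h\cdot(\tau-t_{j-1})(t_j-\tau)\le\frac{\max|u'''|}{12}h^3$ on each subinterval, \emph{keeping} $(t_n-\tau)^{-\alpha-1}$ under the integral, and recombining $\sum_{j=2}^{n-1}\int_{t_{j-1}}^{t_j}(t_n-\tau)^{-\alpha-1}\,d\tau=\frac1\alpha\big[(t_n-t_{n-1})^{-\alpha}-(t_n-t_1)^{-\alpha}\big]\le\frac{h^{-\alpha}}{\alpha}$, the factors of $\alpha$ cancel and one gets the $\frac1{12}\max_{[0,t_n]}|u'''|\,h^{3-\alpha}$ term. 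Summing (i)--(iii) and dividing by $\Gamma(1-\alpha)$ gives the stated bound.

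The main obstacle — and the only place a naive argument fails — is step (iii): if one pulled $(t_n-\tau)^{-\alpha-1}$ out of each history integral as $(t_n-t_j)^{-\alpha-1}$ and summed, the resulting constant would be of order $\sum_{k\ge1}k^{-1-\alpha}=\zeta(1+\alpha)$, which blows up as $\alpha\to 0^+$ and cannot yield the $\alpha$-uniform constant $\frac1{12}$. Recombining the weight into a single integral over $[t_1,t_{n-1}]$, so that its $\frac1\alpha$ cancels the $\alpha$ produced by the integration by parts, is the key point; the rest is bookkeeping of elementary integrals, which is carried out in detail in \cite{gao14}.
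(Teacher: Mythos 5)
Your proof is correct. Note that the paper does not prove this lemma at all --- it simply cites \cite{gao14} --- so there is no in-paper argument to compare against; your derivation (integration by parts on each subinterval using that $u-\Pi_{J,h}u$ vanishes at the nodes, splitting into the first interval, the middle intervals, and the singular last interval, and in particular recombining $(t_n-\tau)^{-\alpha-1}$ into a single integral over $[t_1,t_{n-1}]$ so that the $1/\alpha$ cancels the $\alpha$ from the integration by parts) is essentially the standard argument of Gao, Sun and Zhang, and all of your elementary integrals and algebraic identities (e.g.\ $\tfrac{1+\alpha}{(1-\alpha)(2-\alpha)}+\tfrac{1}{3-\alpha}=\tfrac{2}{(1-\alpha)(2-\alpha)}\bigl(\tfrac12+\tfrac1{3-\alpha}\bigr)$) check out, reproducing the stated constants exactly.
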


The following two Lemmas establish an error bound for ${\cal R}_K$.

\begin{lem}\label{lem:5}
At the $n$-th time step, the following inequality is satisfied
\begin{equation}
\bigg|\Big(\frac{{t_n-I_{i+\frac 1 2}  }}{t_n-\tau}\Big)^\alpha-\sum\limits_{k=0}^Kw_k\Big(
\frac{\tau-I_{i+\frac 1 2}  }{t_n-I_{i+\frac 1 2}  }\Big)^k
\bigg|\leq \epsilon_K,\quad \forall \tau\in[I_{i+1}  ,I_{i}  ].
\end{equation}
\end{lem}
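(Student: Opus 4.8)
The plan is to reduce the claimed inequality directly to the defining property~\eqref{eq:216} of the polynomial approximation by means of a change of variable. First I would set $\tilde\tau=\dfrac{\tau-I_{i+\frac12}}{t_n-I_{i+\frac12}}$, exactly the substitution already introduced before~\eqref{eq:217-2}. With this notation one has $\dfrac{\tau-I_{i+\frac12}}{t_n-I_{i+\frac12}}=\tilde\tau$ and, since $t_n-\tau=(t_n-I_{i+\frac12})-(\tau-I_{i+\frac12})=(t_n-I_{i+\frac12})(1-\tilde\tau)$, also $\dfrac{t_n-I_{i+\frac12}}{t_n-\tau}=\dfrac{1}{1-\tilde\tau}$. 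Hence the left-hand side of the asserted inequality becomes $\bigl|(1-\tilde\tau)^{-\alpha}-\sum_{k=0}^K w_k\tilde\tau^{\,k}\bigr|$, which is precisely the quantity bounded by $\epsilon_K$ in~\eqref{eq:216}.

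The one thing that must be checked to invoke~\eqref{eq:216} is that $\tilde\tau$ lands in the interval $[-\tfrac13,\tfrac13]$ for every $\tau\in[I_{i+1},I_{i}]$. This is where Lemma~\ref{lem:1} does the work. The second inequality in~\eqref{eq:tem:1} gives $\dfrac{\tau-I_{i}}{t_n-I_{i}}\le\tfrac12$ on $[I_i,I_{i-1}]$, and applied with index shifted by one it yields $\dfrac{\tau-I_{i+1}}{t_n-I_{i+1}}\le\tfrac12$ on $[I_{i+1},I_i]$. From the second and fourth structural properties of the FAOM subintervals, the subinterval $[I_{i+1},I_i]$ has length at most half of $t_n-I_{i+1}$ (indeed $t_n-I_{i+1}\ge 2(I_i-I_{i+1})$ by the estimate displayed in the proof of Lemma~\ref{lem:1}), so $I_{i+\frac12}$, being the midpoint, satisfies $|\tau-I_{i+\frac12}|\le\tfrac12(I_i-I_{i+1})\le\tfrac14(t_n-I_{i+1})$, while $t_n-I_{i+\frac12}\ge\tfrac34(t_n-I_{i+1})$. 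Dividing gives $|\tilde\tau|\le\tfrac13$, uniformly in $\tau\in[I_{i+1},I_i]$. (The excerpt in fact already asserts "$-\tfrac13\le\tilde\tau\le\tfrac13$ holds for all $\tau\in[I_{i+1},I_i]$" right before~\eqref{eq:217-2}, citing Lemma~\ref{lem:1}, so this step may be quoted rather than redone.)

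Putting the two pieces together: for any $\tau\in[I_{i+1},I_i]$ the value $\tilde\tau$ lies in $[-\tfrac13,\tfrac13]$, so~\eqref{eq:216} applies and gives exactly the bound $\epsilon_K$ after rewriting both terms in the original variables as above. I do not anticipate a genuine obstacle here; the only mild subtlety is bookkeeping the index shift (the bound in Lemma~\ref{lem:1} is stated for $[I_i,I_{i-1}]$ with midpoint $I_{i-\frac12}$, whereas the present statement concerns $[I_{i+1},I_i]$ with midpoint $I_{i+\frac12}$), and confirming that the midpoint choice $I_{i+\frac12}=(I_i+I_{i+1})/2$ is compatible with the half-length estimate so that $|\tilde\tau|\le\tfrac13$ rather than merely $|\tilde\tau|\le\tfrac12$. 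Once that is verified, the lemma is immediate.
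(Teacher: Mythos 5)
Your proposal is correct and follows essentially the same route as the paper's own proof: the change of variable $\tilde\tau=(\tau-I_{i+\frac12})/(t_n-I_{i+\frac12})$, the identity $(t_n-I_{i+\frac12})/(t_n-\tau)=1/(1-\tilde\tau)$, and the use of Lemma~\ref{lem:1} to show $|\tilde\tau|\le\tfrac13$ before invoking~(\ref{eq:216}). Your explicit verification that $t_n-I_{i+\frac12}\ge\tfrac34(t_n-I_{i+1})$ and $|\tau-I_{i+\frac12}|\le\tfrac14(t_n-I_{i+1})$ is just a slightly more detailed version of the paper's one-line deduction $t_n-I_{i+\frac12}\ge 3(I_i-I_{i+\frac12})$.
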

\begin{proof}
From Lemma \ref{lem:1}, we have $\frac{\tau-I_{i+1}  }{t_n-I_{i+1}  }\leq \frac 1 2$ holds for all $\tau\in[I_{i+1}  ,
I_{i}  ]$, which implies $t_n-I_{i+\frac12}  \geq 3 (
I_{i}  -I_{i+\frac 1 2}  )$.
Then we obtain $\Big|
\frac{\tau-I_{i+\frac 1 2}  }{t_n-I_{i+\frac 1 2}  }\Big|\leq \frac 1 3$ holds for all
$\tau\in[I_{i+1}  ,I_{i}  ]$.
Thanks to (\ref{eq:216}), the following inequality holds for all $\tau\in[I_{i+1}  ,I_{i}  ]$
\begin{equation}
\begin{aligned}
\bigg|\Big(\frac{{t_n-I_{i+\frac 1 2}  }}{t_n-\tau}\Big)^\alpha-\sum\limits_{k=0}^Kw_k\bar \tau^k
\bigg|
=\Big|\frac{1}{(1-\bar \tau)^\alpha}-\sum\limits_{k=0}^Kw_k\bar \tau^k
\Big|\leq \epsilon_K,
\end{aligned}
\end{equation}
where $\bar \tau=
\frac{\tau-I_{i+\frac 1 2}  }{t_n-I_{i+\frac 1 2}  }$. The proof of the Lemma is completed.

\end{proof}

\begin{lem}\label{lem:6}
Suppose $u(t)\in C^1[0,t_n]$. At the $n$-th time step, the total cut off error according to the polynomial
approximation ${\cal R}_K$ is bounded by
\begin{equation}
\begin{aligned}
\big|{\cal R}_K\big|\leq C\epsilon_Kt_n^{1-\alpha}
\max\limits_{0\leq t\leq t_n}\big|u'(t)\big|,
\end{aligned}
\end{equation}
where $C$ is a constant independent of $h$ and $\epsilon_K$.
\end{lem}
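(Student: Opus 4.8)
The plan is to estimate ${\cal R}_K=\sum_{i=0}^{{\cal M}_n-1}{\cal R}_K^i$ subinterval by subinterval and then add up. First I would write each local error in the scaled form used to derive (\ref{eq:217-2}): freezing the kernel at the midpoint $I_{i+\frac12}$ and subtracting (\ref{eq:217-2}) from (\ref{eq:217-1}) gives
\[
{\cal R}_K^i=\frac{1}{\Gamma(1-\alpha)\,(t_n-I_{i+\frac12})^{\alpha}}\int\limits_{I_{i+1}}^{I_{i}}\big(\Pi_{J,h}u(\tau)\big)'\Big[\Big(\frac{t_n-I_{i+\frac12}}{t_n-\tau}\Big)^{\alpha}-\sum\limits_{k=0}^{K}w_k\Big(\frac{\tau-I_{i+\frac12}}{t_n-I_{i+\frac12}}\Big)^{k}\Big]\txn d\tau.
\]
By Lemma \ref{lem:5} the bracketed factor is at most $\epsilon_K$ in absolute value for every $\tau\in[I_{i+1},I_i]$, so $|{\cal R}_K^i|\le \frac{\epsilon_K}{\Gamma(1-\alpha)(t_n-I_{i+\frac12})^{\alpha}}\int_{I_{i+1}}^{I_i}\big|(\Pi_{J,h}u(\tau))'\big|\,\txn d\tau$.

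Second I would dispose of the remaining integral. Since the grid is uniform and $u\in C^1[0,t_n]$, a standard estimate gives $|(\Pi_{J,h}u(\tau))'|\le C\max_{0\le t\le t_n}|u'(t)|$ on every grid cell contained in $[0,t_{n-1}]$, with $C$ depending only on $J$ (for $J=1$ this is the mean value theorem, and for $J=2$ it follows by bounding $\delta_t^2u^{j-1}$ by $2h^{-1}\max|u'|$). Hence $\int_{I_{i+1}}^{I_i}|(\Pi_{J,h}u(\tau))'|\,\txn d\tau\le C(I_i-I_{i+1})\max_{0\le t\le t_n}|u'(t)|$ and
\[
\big|{\cal R}_K\big|\le \frac{C\,\epsilon_K}{\Gamma(1-\alpha)}\max\limits_{0\le t\le t_n}|u'(t)|\sum\limits_{i=0}^{{\cal M}_n-1}\frac{I_i-I_{i+1}}{(t_n-I_{i+\frac12})^{\alpha}}.
\]

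The heart of the proof is the last sum, which I would bound by comparison with $\int_0^{t_{n-1}}(t_n-\tau)^{-\alpha}\txn d\tau$. The elementary observation is that $t_n-I_{i+\frac12}=\frac12\big[(t_n-I_i)+(t_n-I_{i+1})\big]\ge\frac12(t_n-I_{i+1})\ge\frac12(t_n-\tau)$ for every $\tau\in[I_{i+1},I_i]$, using $I_i\le t_{n-1}<t_n$ and $\tau\ge I_{i+1}$; consequently $(t_n-I_{i+\frac12})^{-\alpha}\le 2^{\alpha}(t_n-\tau)^{-\alpha}$ uniformly on that subinterval. Writing $\frac{I_i-I_{i+1}}{(t_n-I_{i+\frac12})^{\alpha}}=\int_{I_{i+1}}^{I_i}(t_n-I_{i+\frac12})^{-\alpha}\txn d\tau\le 2^{\alpha}\int_{I_{i+1}}^{I_i}(t_n-\tau)^{-\alpha}\txn d\tau$ and summing over $i$ — the subintervals $[I_{i+1},I_i]$ partition $[0,t_{n-1}]$ by the first of the properties listed after Algorithm 1 — the sum collapses to $2^{\alpha}\int_0^{t_{n-1}}(t_n-\tau)^{-\alpha}\txn d\tau\le\frac{2^{\alpha}}{1-\alpha}t_n^{1-\alpha}$. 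Substituting into the previous display yields $|{\cal R}_K|\le C\epsilon_K t_n^{1-\alpha}\max_{0\le t\le t_n}|u'(t)|$ with $C$ depending only on $\alpha$ and $J$, in particular independent of $h$ and $\epsilon_K$, which is the claim.

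I do not anticipate a serious obstacle: once the ${\cal R}_K^i$ are in the scaled form above, the termwise bound is an immediate consequence of Lemma \ref{lem:5}, and the only step requiring a little care is the uniform comparison of the frozen weight $(t_n-I_{i+\frac12})^{-\alpha}$ with the true kernel $(t_n-\tau)^{-\alpha}$ across a possibly long subinterval — this is precisely what turns the weighted sum of subinterval lengths into a convergent integral, and the comparison constant ($2^{\alpha}$) is completely elementary. The auxiliary bound $|(\Pi_{J,h}u)'|\le C\max|u'|$ is the other ingredient and is entirely standard.
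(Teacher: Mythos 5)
Your proof is correct, and its first half coincides with the paper's: both write ${\cal R}_K^i$ in the scaled midpoint form, apply Lemma \ref{lem:5} termwise, and arrive at $|{\cal R}_K^i|\le C\epsilon_K\,(I_i-I_{i+1})\,(t_n-I_{i+\frac12})^{-\alpha}\max|u'|$ (your auxiliary bound $|(\Pi_{J,h}u)'|\le C\max|u'|$ is used implicitly in the paper at this point). Where you genuinely diverge is the summation over $i$. The paper invokes $t_n-I_{i+\frac12}\ge 3(I_i-I_{i+\frac12})$ to replace each term by $C\epsilon_K(I_i-I_{i+1})^{1-\alpha}\max|u'|$ and then simply sums, asserting the total is $\le C\epsilon_K t_n^{1-\alpha}\max|u'|$; since $x\mapsto x^{1-\alpha}$ is subadditive, $\sum_i(I_i-I_{i+1})^{1-\alpha}$ is in general \emph{larger} than $\big(\sum_i(I_i-I_{i+1})\big)^{1-\alpha}$, so this last step secretly relies on the geometric structure of the partition (at most $2{\cal N}_\tau-2$ subintervals of each length $h{\cal N}_\tau^{k}$, making the sum dominated by a constant times its largest term), which the paper does not spell out. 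Your route sidesteps this entirely: the pointwise comparison $(t_n-I_{i+\frac12})^{-\alpha}\le 2^{\alpha}(t_n-\tau)^{-\alpha}$ collapses the weighted sum of lengths into $2^{\alpha}\int_0^{t_{n-1}}(t_n-\tau)^{-\alpha}\,\txn{d}\tau\le\frac{2^{\alpha}}{1-\alpha}t_n^{1-\alpha}$, using only that the subintervals partition $[0,t_{n-1}]$. This is cleaner, makes the final step airtight for \emph{any} partition (not just the one produced by Algorithm 1), and yields a constant depending only on $\alpha$ and $J$ rather than also on ${\cal N}_\tau$.
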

\begin{proof}
From Lemma \ref{lem:5} and (\ref{eq:217-2}),  we have 
\begin{equation}
\begin{aligned}
\big|{\cal R}^i_K\big|&=\frac1{\Gamma(1-\alpha)}\Bigg|\int\limits_{I_{i+1}  }^{I_{i}  }\frac{\big(\Pi_{J,h} u(\tau)\big)'}
{\big(t_n-I_{i+\frac 12}  \big)^\alpha}\bigg(\Big(\frac{t_n-I_{i+\frac 12}  }{t_n-\tau}\Big)^{\alpha} -
\sum\limits_{k=0}^{K} w_k^i
\Big(\frac{\tau-I_{i+\frac 1 2}  }{t_n-I_{i+\frac 12}  }\Big)^k\bigg) \txn d \tau\Bigg|\\
&\leq C\epsilon_K
\frac{{I_{i}  }-{I_{i+1}  }}
{\big(t_n-I_{i+\frac 12}  \big)^\alpha}\max\limits_{I_{i+1}  \leq t\leq I_{i}  }\big|u'(t)\big|\\
&\leq C\epsilon_K
({{I_{i}  }-{I_{i+1}  }})^{1-\alpha}\max\limits_{I_{i+1}  \leq t\leq I_{i}  }\big|u'(t)\big|,
\end{aligned}
\end{equation}
where $C$ is a constant independent of $h$ and $\epsilon_K$.
By taking the summation of the above equations from $0$ to ${\cal M}_n-1$, it follows that the bound of the total truncation error according to the polynomial
approximation ${\cal R}_K$ is given by
\begin{equation}
\begin{aligned}
\big|{\cal R}_K\big|\leq C\epsilon_Kt_n^{1-\alpha}
\max\limits_{0\leq t\leq t_n}\big|u'(t)\big|.
\end{aligned}
\end{equation}
Here $C$ is a constant independent of $h$ and $\epsilon_K$.
\end{proof}

\begin{niao}\label{thm:1}
Suppose $u(t)\in C^{J+1}[0,t_n]$. For any $\alpha~(0<\alpha<1)$, the gap between the Caputo fractional derivative
$^C_0{\cal D} ^{\alpha}_tu(t)$ and the numerical Caputo fractional derivative $^C_0{\cal D} ^{F,\alpha}_tu(t)$ satisfies 
\begin{equation}
\Big|~^C_0{\cal D} ^{\alpha}_tu(t)-^C_0{\cal D} ^{F,\alpha}_tu(t)\Big|\leq {\cal R}_{J,h}+C\epsilon_Kt_n^{1-\alpha}\max\limits_{0\leq t\leq t_n}\big|u'(t)\big|,
\end{equation}
where  $C$ is a constant independent of $h$ and $\epsilon_K$.
\end{niao}

By combining the above Lemmas, we can prove Theorem \ref{thm:1} easily. As shown in Theorem \ref{thm:1}, the 
FAOM-P$K$ method should have the same order of convergence rate as the corresponding direct method with  small $\epsilon_K$.
Numerical results in the next two sections show that $\epsilon_K\approx $1e-3 with $K=4$ is good enough corresponding to
$L1$ approach and  $\epsilon_K\approx $1e-6 with $K=9$ is good enough corresponding to
$L1-2$ formula, respectively.
From Theorem \ref{thm:1}, we conclude that the fast algorithm FAOM-P$K$ can be used together with any direct scheme with polynomial interpolation of $u(t)$.

\section{Validity of the proposed methods}
In this section, we validate the proposed methods and study the convergence rates. 
Let us consider the following pure initial value problem of the linear fractional diffusion equation
\begin{equation}\label{eq:IVP}
\begin{aligned}
&^C_0{\cal D}^\alpha_t u(x,t)=u_{xx}(x,t)+f(x,t)\qquad &x\in \Omega_x, ~t>0,\\
&u(x,0)=u_0(x)\qquad & x\in\Omega_x,\\
&u(x,t)=\varphi(x,t)\qquad & x\in \partial \Omega_x,~t>0,
\end{aligned}
\end{equation}
where the 
domain $\Omega_x=[a,b]$. Suppose the domain $\Omega_x$ is covered by  a uniform mesh 
with $\Delta x=\frac{b-a}{N}$. The set of all mesh points is denoted as $\Omega_x^{\Delta x}=\big\{x_i,i=0,\,1,\,\cdots,\,N\big\}$,
with $x_i=a+i\Delta x$. We will simply denote the approximation of $u(x_i,t_j)$ by $u^j_i$. 
In this section, we consider a second-order and a fourth-order finite difference scheme to discretize the  spatial
derivative $u_{xx}$, respectively. In the second-order finite difference scheme, $u_{xx}$ is discretized by the central scheme as follows
\begin{equation}\label{eq:central}
u_{xx}(x_i,t_j)\approx \frac{u^j_{i+1}+u^j_{i-1}-2u^j_i}{\Delta x^2}.
\end{equation}
The fourth-order finite difference scheme we used is proposed in \cite{gao13}, which is a compact difference scheme
and achieves fourth-order accuracy in space. In this section, a test case is studied to validate the proposed methods.

\noindent\textbf{Example 3.1}. In (\ref{eq:IVP}), we set the computational domain $\Omega_x=[0,\pi]$. The source term $f(x,t)$, the initial data $u_0(x)$,
and the boundary value $\varphi(x,t)$ are given by
\begin{equation}
\begin{aligned}f(x,t)=
&\Gamma(4+\alpha)x^4(\pi-x)^4\exp(-x)t^3/6-x^2(\pi-x)^2\big\{t^{3+\alpha}\exp(-x)\\
&\quad [x^2(56-16x+x^2)-2\pi x(28-12x+x^2)+\pi^2(12-8x+x^2)]\\
&\quad +4(3\pi^2-14\pi x+14x^2)\big\}  \quad\qquad x\in\Omega_x,~t\in(0,T],\\
u_0(x)~=&
x^4(\pi-x)^4 \quad\qquad\qquad\qquad\qquad\quad~~ x\in\Omega_x,\\
\varphi(x,t)=&x^4(\pi-x)^4\big[\exp(-x)t^{3+\alpha}+1\big]\qquad~  x\in\partial\Omega_x,~t\in(0,T].
\end{aligned}
\end{equation}
It is clear that the linear problem (\ref{eq:IVP}) has the following exact solution
\begin{equation}
u(x,t)=x^4(\pi-x)^4\big[\exp(-x)t^{3+\alpha}+1\big]\qquad x\in \Omega_x,~t\in[0,T].
\end{equation}
To test the accuracy of our schemes, we define the maximum norm of the error and the convergence rates with respect
to temporal and spatial mesh sizes, which are given as follows
$$
E(\Delta x,h)=\sqrt{h\sum\limits_{j=1}^{N_T}\|\textbf{e}^j\|_\infty^2},\quad r_s=\log_2\frac{E(\Delta x,h)}{E(\Delta x/2,h)},
\quad r_t=\log_2\frac{E(\Delta x,h)}{E(\Delta x,h/2)},
$$
where the error $e^j_i=u(x_i,t_j)-u_i^j$. 

To understand the accuracy of the FAOM method and the FAOM-P$K$ method in time, 
we run the code with different time step sizes $h=1/10,\,1/20,\,1/40,\,1/80$, $1/160$, and a fixed spatial mesh size $\Delta x=\pi/20000$. 
For comparison, we also simulate the example by the cut off approach, $L1$ formula, $L1-2$ formula, and
a fast method proposed in \cite{jiang}. The computational errors and numerical convergence rates for different methods
with $\alpha=0.9,\,0.5,\,0.1$
are given in Tables \ref{Table:0-1} and \ref{Table:0-2}. The results reported in Table \ref{Table:0-1} show that the error of cut off 
approach increases as the time step size decrease. However, the FAOM method achieves  bounded errors for all time
step sizes we used, even the storages of memory for the two methods are almost the same. As reported in the tables, the $L1$ 
formula and $L1-2$ formula both reach the ideal convergence orders, which are $2-\alpha$ and $3-\alpha$, respectively.
We also find that the accuracy of the FAOM-P$K$ method is as good as the $L1$ formula and $L1-2$ formula, if
the same interpolation function $\Pi_{J,h}u(t)$ is used. 
All results are consistent with our analysis given in the previous section.
In the paper, the polynomial approximation of the kernel function $\frac{1}{(1-\tau)^\alpha}$ is given by the Taylor expansion.

\begin{table}[htb]\caption{The errors and convergence orders in time with fixed spatial mesh size
$\Delta x=\pi/20000$ for the proposed methods. In all methods, $u(t)$ is approximated by $\Pi_{1,h}u(t)$,
and the second-order finite difference scheme is used.
$T=1$, ${\cal N}_{\tau}=2$, $\bar S=10$. In the FAOM-P$K$ method, we choose $K=4$
such that $\epsilon_K\approx 1$e-3. Here ``JIANG" denotes the fast algorithm presented in \cite{jiang}.
}\label{Table:0-1}
\begin{center}
\begin{tabular}{|c|cc|cc|cc|cc|cc|}\hline
 \multirow{2}*{$h$}
  &   \multicolumn{2}{|c|}{Cut off} &   \multicolumn{2}{|c|}{FAOM}&  \multicolumn{2}{|c|}{$L1$ formula}&   \multicolumn{2}{|c|}{JIANG} &   \multicolumn{2}{|c|}{FAOM-P$4$}  \\
   & $E(\cdot,\cdot)$  & $r_t$ & $E(\cdot,\cdot)$  & $r_t$& $E(\cdot,\cdot)$  & $r_t$& $E(\cdot,\cdot)$  & $r_t$& $E(\cdot,\cdot)$  & $r_t$\\ 
 \hline
      \multicolumn{11}{|c|}{$\alpha=0.9$} \\ 
 \hline
 1/10   &  3.66e-1	  & - & 3.69e-1  & 1.16& 3.66e-1  & 1.17& 3.66e-1  & 1.17& 3.66e-1  & 1.17\\ 
 1/20   & 1.66e-1 	  & - & 1.65e-1  & 1.11& 1.62e-1  & 1.14& 1.62e-1  & 1.14& 1.62e-1  & 1.14\\ 
 1/40   & 1.06e-1 	  & - & 7.66e-2  & 1.05& 7.39e-2  & 1.12& 7.39e-2 & 1.12& 7.39e-2  & 1.12\\ 
 1/80   & 1.34e-1         & - & 3.69e-2  & 0.97& 3.41e-2  & 1.11& 3.41e-2  & 1.11& 3.41e-2  & 1.11\\ 
 1/160 & 2.16e-1 	  & - & 1.89e-2  & -     & 1.58e-2     & -& 1.58e-2       & - & 1.58e-2  & -\\ 
 \hline
       \multicolumn{11}{|c|}{$\alpha=0.5$} \\ 
 \hline
 1/10   &  7.59e-2	  & - & 8.22e-2  & 1.28& 7.59e-2  & 1.48& 7.60e-2  & 1.48& 7.60e-2  & 1.48\\ 
 1/20   & 6.10e-2 	  & - & 3.38e-2  & 1.02& 2.73e-2  & 1.47& 2.73e-2  &1.47& 2.73e-2  & 1.47\\ 
 1/40   & 2.45e-1 	  & - & 1.67e-2  & 0.63& 9.83e-3  & 1.48& 9.84e-3  & 1.47& 9.85e-3  & 1.47\\ 
 1/80   &   6.11e-1       & - & 1.08e-2  & 0.27& 3.54e-3  & 1.48& 3.54e-3  & 1.48& 3.56e-3  & 1.47\\ 
 1/160 & 1.11 		  & - & 8.97e-3  & -     & 1.27e-3  & -& 1.27e-3  & -     & 1.29e-3  & -\\ 
 \hline
        \multicolumn{11}{|c|}{$\alpha=0.1$} \\ 
 \hline
 1/10   &  5.35e-3	  & - & 7.50e-3  & 1.03& 5.35e-3  & 1.76& 5.35e-2  & 1.76& 5.35e-2  & 1.76\\ 
 1/20   & 1.10e-1 	  & - & 3.68e-3  & 0.46& 1.58e-3  & 1.77& 1.58e-3  & 1.77& 1.58e-3  & 1.76\\ 
 1/40   & 4.91e-1 	  & - & 2.67e-3  & 0.11& 4.63e-4  & 1.78& 4.64e-4  & 1.77& 4.65e-4  & 1.77\\ 
 1/80   & 1.05              & - & 2.47e-3  & -      & 1.35e-4  & 1.79& 1.36e-4  & 1.77& 1.36e-4  & 1.76\\ 
 1/160 & 1.61 		  & - & 2.48e-3  & -      & 3.90e-5  & -& 3.99e-5  & -      & 4.00e-5  & -\\ 
 \hline
\end{tabular}
\end{center}
\end{table}

\begin{table}[htb]\caption{The errors and convergence orders in time with fixed spatial mesh size
$\Delta x=\pi/20000$ for the proposed methods. In all methods, $u(t)$ is approximated by $\Pi_{2,h}u(t)$,
and the fourth-order compact finite difference scheme is used.
$T=1$, ${\cal N}_{\tau}=2$. In the FAOM-P$K$ method, we choose $K=9$
such that $\epsilon_K\approx 1$e-6.
}\label{Table:0-2}
\begin{center}
\begin{tabular}{|c|c|cc|cc|}\hline
  \multirow{2}*{}&\multirow{2}*{$h$}
  &  \multicolumn{2}{|c|}{$L1-2$}&      \multicolumn{2}{|c|}{FAOM-P$9$}  \\
&   & $E(\cdot,\cdot)$  & $r_t$& $E(\cdot,\cdot)$  & $r_t$\\ 
 \hline
  \multirow{5}*{$\alpha=0.9$}
 &1/10      & 6.30e-2  & 2.06& 6.30e-2  & 2.06\\ 
 &1/20      & 1.51e-2  & 2.08& 1.51e-2  & 2.08\\ 
 &1/40      & 3.57e-3  & 2.09& 3.57e-3  & 2.09\\ 
 &1/80     & 8.39e-4  & 2.09& 8.39e-4  & 2.09\\ 
 &1/160  & 1.96e-4  & -   &  1.96e-2  & -\\ 
 \hline \multirow{5}*{$\alpha=0.5$}
 &1/10    &  1.03e-2  & 2.44& 1.02e-2  & 2.44\\ 
 &1/20    &  1.89e-3 & 2.46 & 1.89e-3  & 2.46\\ 
 &1/40    &  3.44e-4  & 2.47& 3.44e-4  & 2.47\\ 
 &1/80    &  6.21e-5  & 2.48& 6.21e-5  & 2.48\\ 
 &1/160   &  1.11e-5  & -      & 1.11e-5  & -\\ 
 \hline
  \multirow{5}*{$\alpha=0.1$}
 &1/10   &  5.59e-4  & 2.76& 5.54e-4  & 2.76\\ 
 &1/20   &  8.24e-5  & 2.77& 8.20e-5  & 2.77\\ 
 &1/40   &  1.20e-5  & 2.94& 1.20e-5  & 2.94\\ 
 &1/80   &  1.57e-6  & 2.45& 1.57e-6  & 2.45\\ 
 &1/160 &  2.88e-7  & -      & 2.88e-7 & -\\ 
 \hline
\end{tabular}
\end{center}
\end{table}

To check the convergence rate of our FAOM-P$K$ method in space,
we do the simulations with different spatial mesh sizes $\Delta x=\pi/20,\,\pi/40,\,\pi/80,\,\pi/160$, $\pi/320$
and a fixed time step size $h=0.001$. Two simulations with $\alpha=0.5$ are considered. In the first one, $u(t)$ is approximated by
$\Pi_{1,h}u(t)$ and $u_{xx}$ is discretized by the second-order finite difference scheme. 
In the second simulation, $u(t)$ is approximated by
$\Pi_{2,h}u(t)$ and $u_{xx}$ is discretized by the fourth-order compact finite difference scheme. As shown in Fig.~\ref{fig:1-4},
the FAOM-P$K$ can reach the ideal convergence order in space for the two finite difference schemes. 

We then investigate the long time performance of 
FAOM-P$K$ method. We compute the example until $T=10$ with $h=0.01$ and $\Delta x=\pi/20000$. Four different methods are considered, which are
$L1$ formula, $L1-2$ formula, the fast method proposed in \cite{jiang}, and the FAOM-P$K$ method, respectively. We focus
on the accuracy and memory usage of those methods. As plotted in Fig.~\ref{fig:1-5}-(a), the the FAOM-P$K$ method and the fast method proposed in \cite{jiang}
can reach the same accuracy with the $L1$ formula.
Furthermore, the high order FAOM-P$K$ algorithm
also has similar accuracy as compared with the $L1-2$ formula. 
We plot the length of vector $\mathbf{U}^n$ as a function of $n$ in Fig.~\ref{fig:1-5}-(b),
from which we clearly see that ${\cal M}_n$ is between $\log_2 n-1$ and $2\log_2\frac{n+1}{2}$. The relationship verifies the 
Lemma \ref{lem:2.2} and shows that the storage of memory for the FAOM-P$K$ algorithm is $
{O}\big((K+1){\cal N}_{\tau}\log_{{\cal N}_{\tau}}n\big)$ at
each time step. 
As a function of total time steps $N_T$, the total computational times of the direct methods and the FAOM-P$K$ method are plotted
in Fig.~\ref{fig:1-5}-(c-d). We observe that the total compute time increases almost linearly
with the total number of time steps $N_T$  for the FAOM-P$K$ method, but the total compute time for the direct scheme 
is in the order of $O(N_T^2)$. There is a significant speed-up in the FAOM-P$K$ algorithm as compared with the direct schemes.

\begin{figure}[htb!]
\begin{center}
{\includegraphics[width=0.47\textwidth]{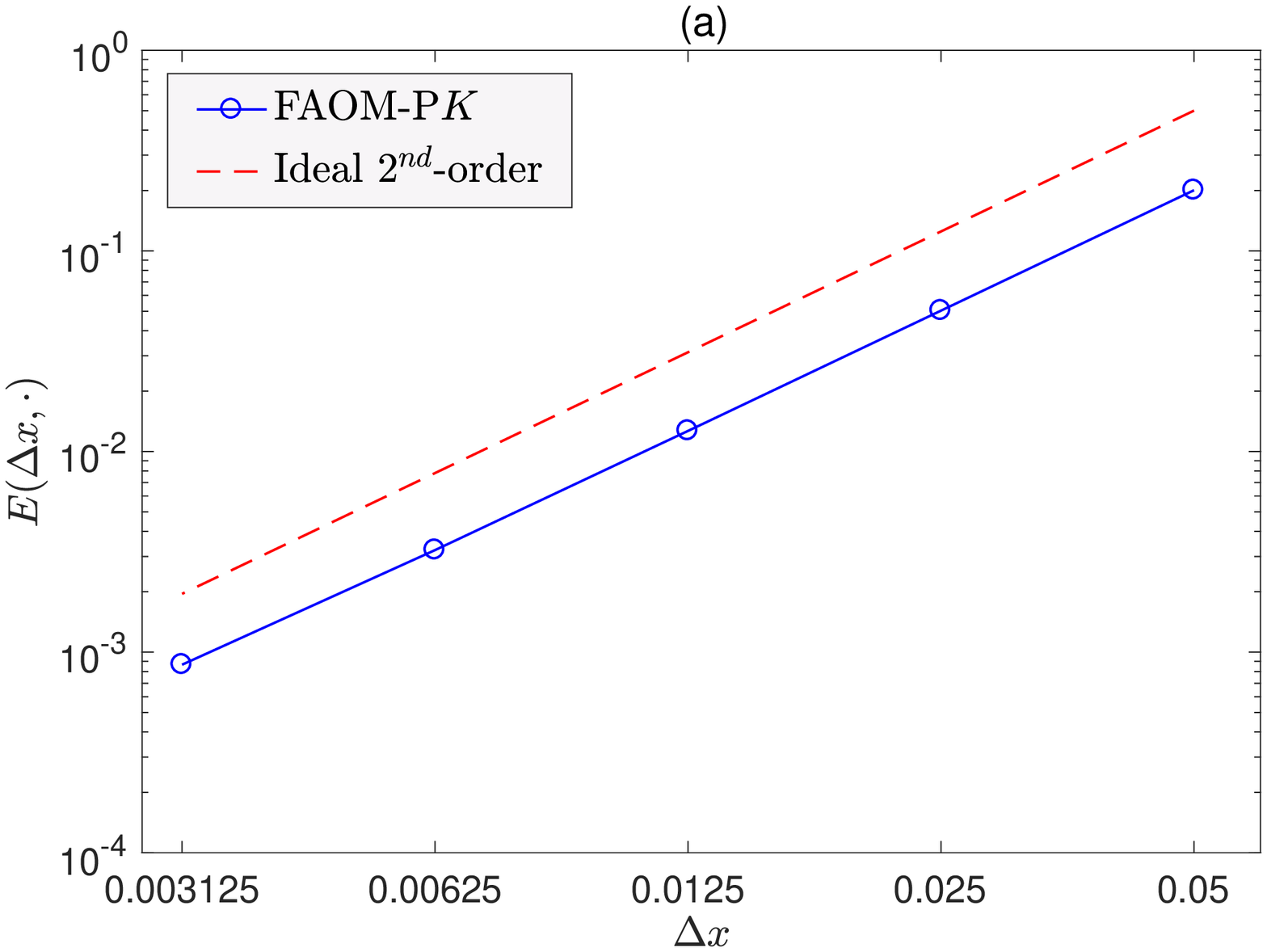}}
{\includegraphics[width=0.47\textwidth]{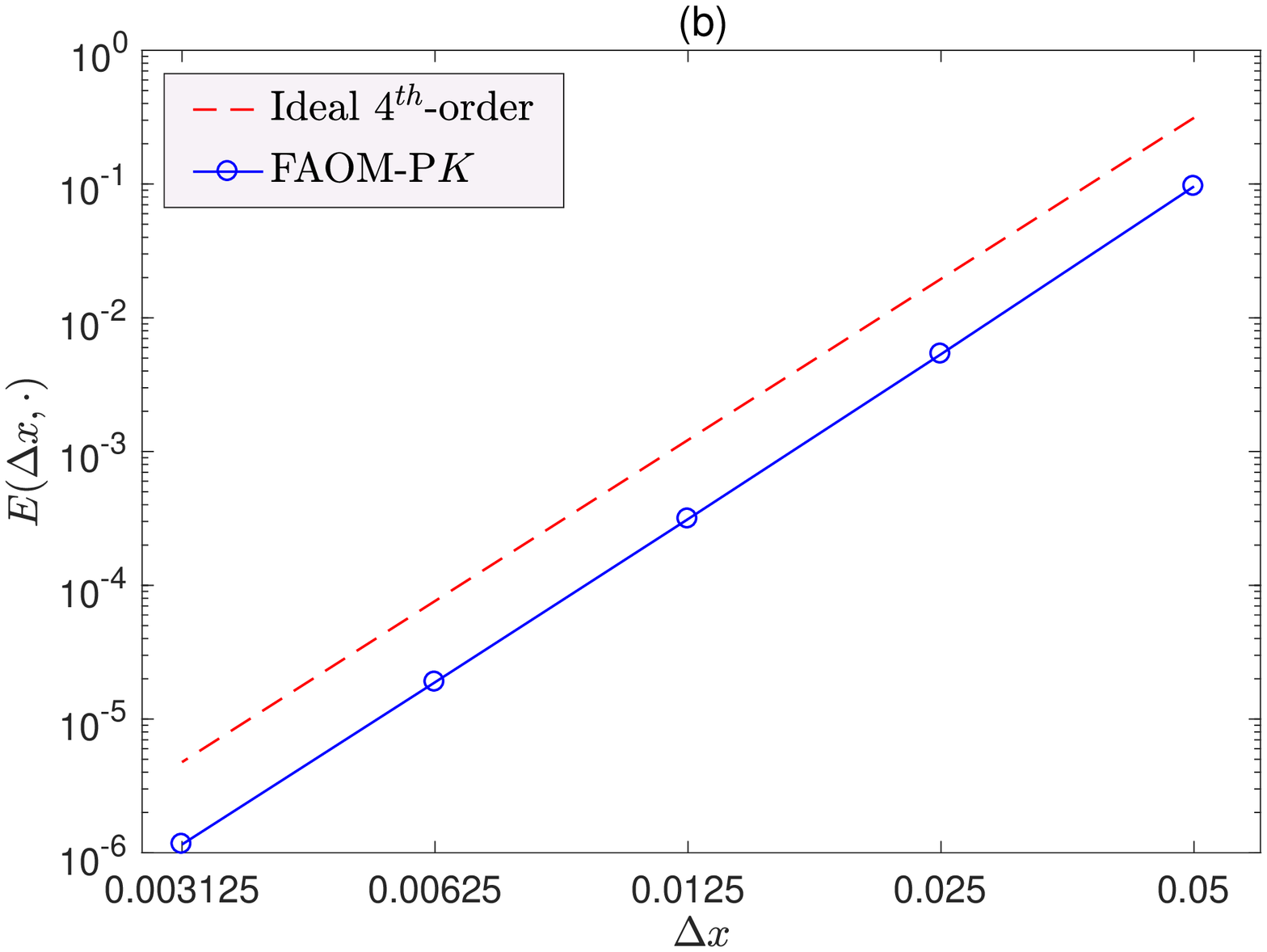}}
\end{center}
\caption{
The errors in space with fixed time step size
$h=0.001$ for the proposed methods and $T=1$. (a) 
$u(t)$ is approximated by $\Pi_{1,h}u(t)$, the second-order finite difference scheme is
used to discretize $u_{xx}$,  and $K=4$; (b) $u(t)$ is approximated by $\Pi_{2,h}u(t)$, the fourth-order compact finite difference scheme is
used to discretize $u_{xx}$, and $K=9$.
  }\label{fig:1-4}
\end{figure}

\begin{figure}[htb!]
\begin{center}
{\includegraphics[width=0.47\textwidth]{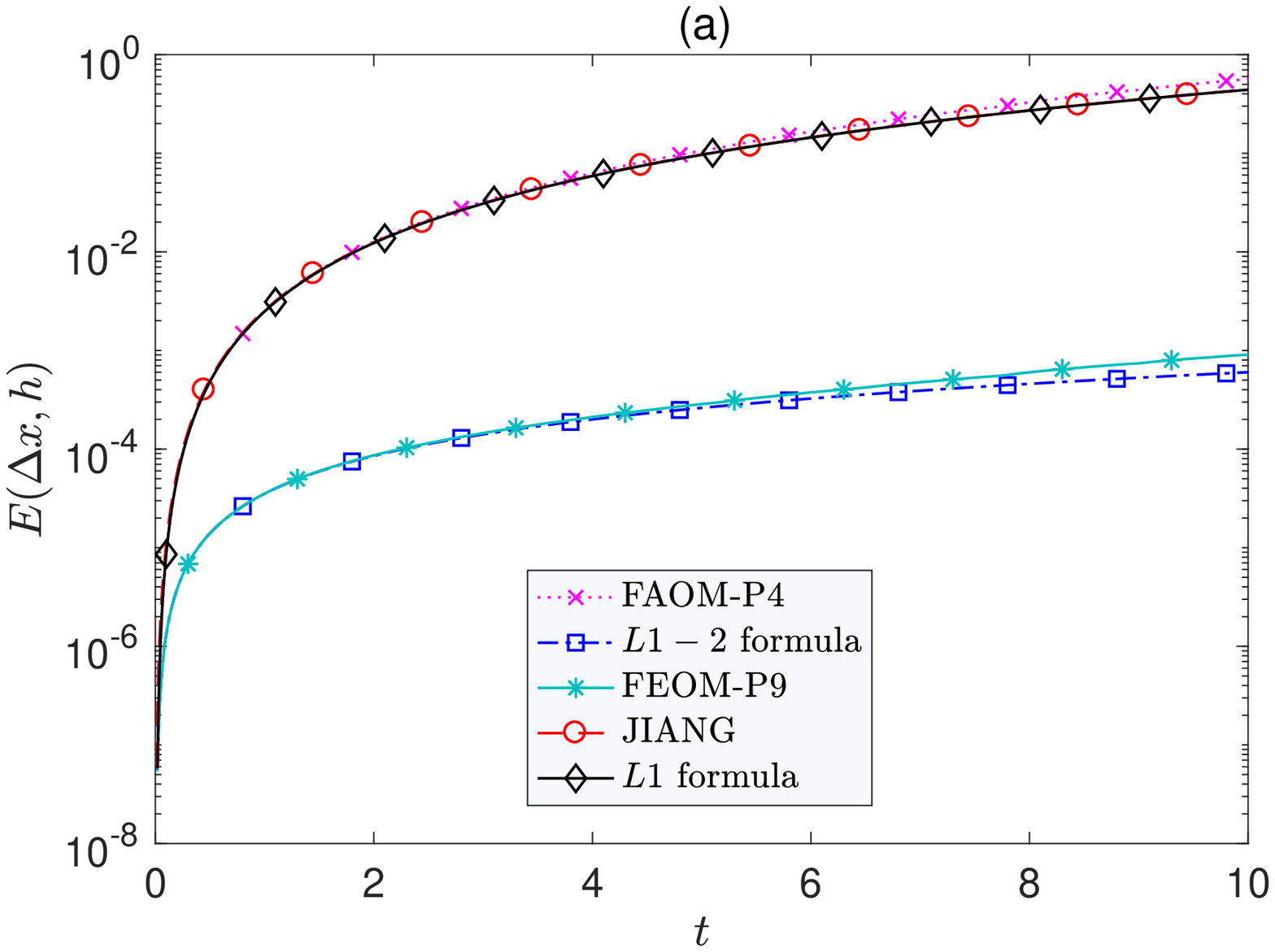}}
{\includegraphics[width=0.47\textwidth]{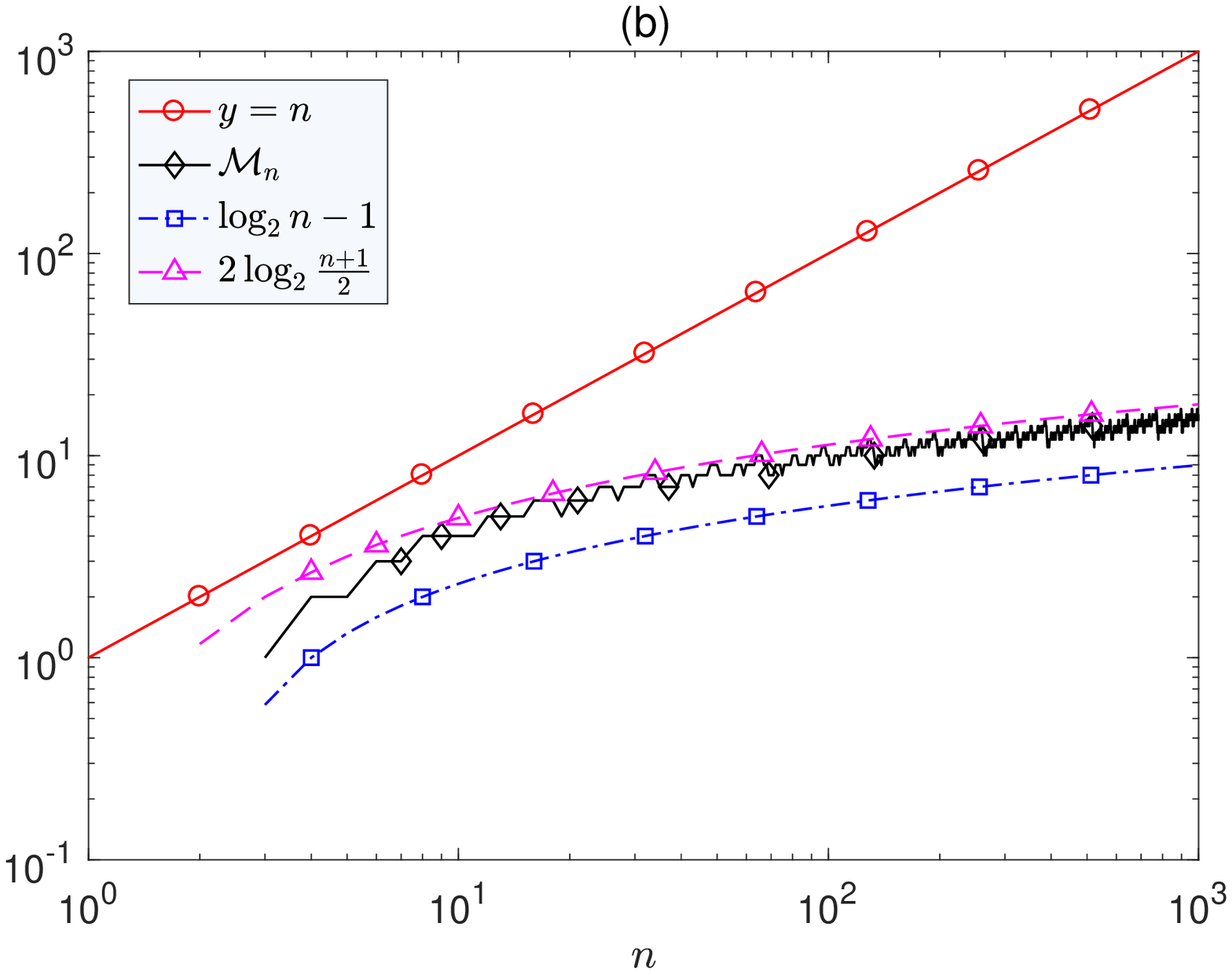}}
{\includegraphics[width=0.47\textwidth]{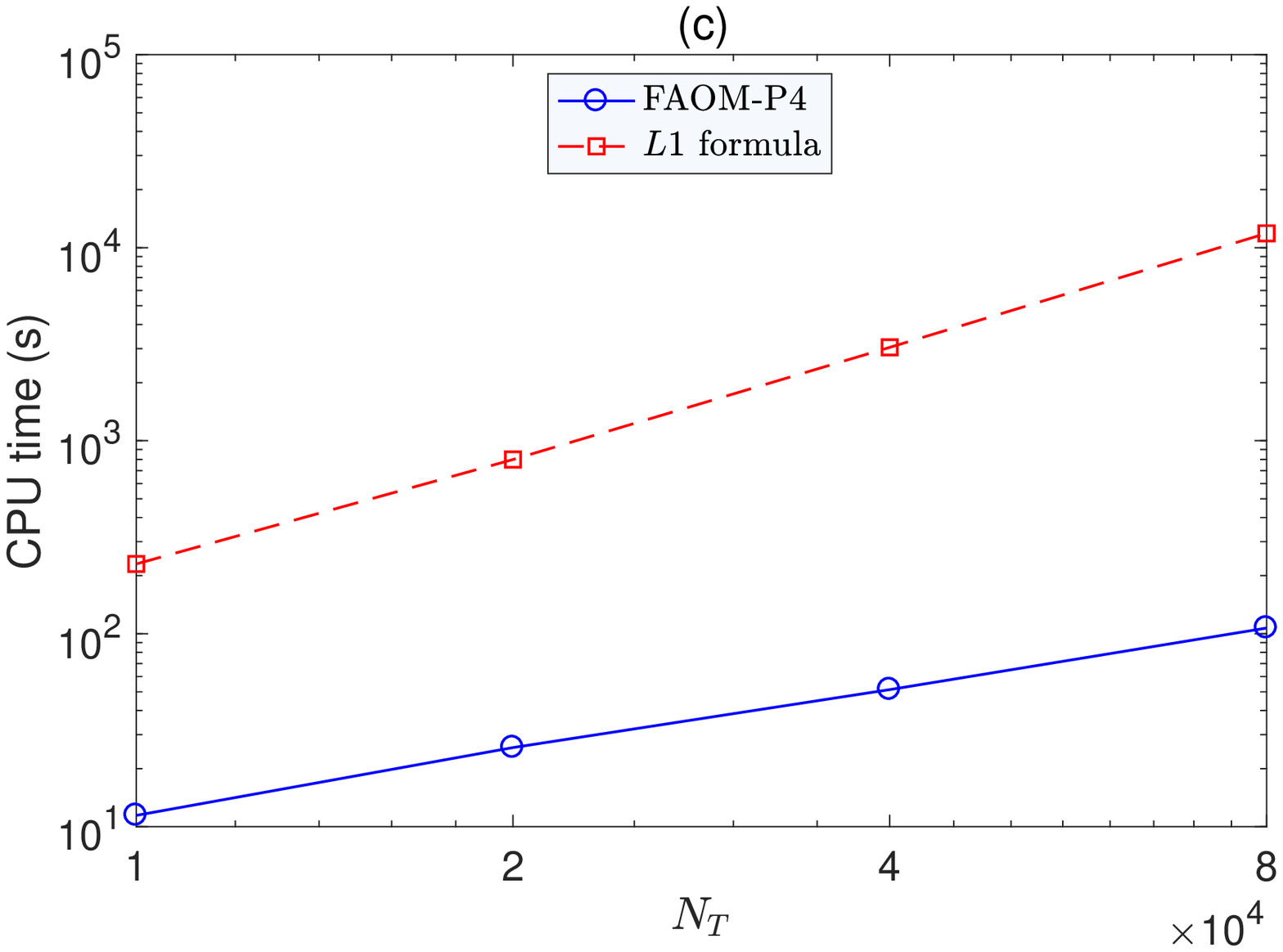}}
{\includegraphics[width=0.47\textwidth]{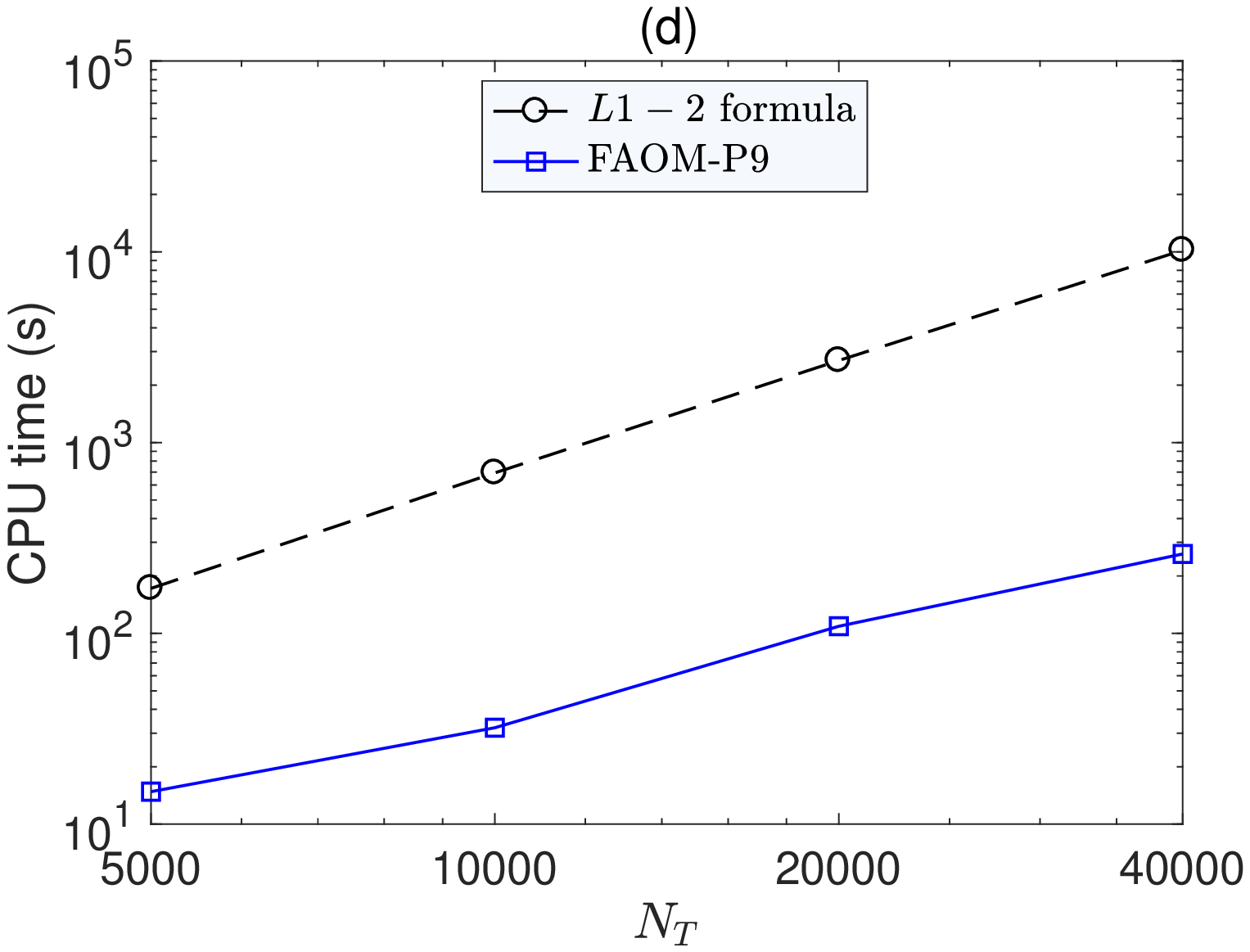}}
\end{center}
\caption{
Long time performance of the proposed methods. $\alpha=0.5$. Here FEOM-P4 denotes the FAOM-P$K$ method with $\Pi_{1,h}u(t)$ 
interpolation approximation and an approximation of $\frac 1 {(1-\tau)^\alpha}$ by a polynomial of the
fourth degree. 
FAOM-P9 denotes the FAOM-P$K$ method with $\Pi_{2,h}u(t)$ 
interpolation approximation and an approximation of $\frac 1 {(1-\tau)^\alpha}$ by a polynomial of the
ninth degree. (a) The error $E(\Delta x,h)$ at each time step for the given methods are given.
(b) The relationship between ${\cal M}_n$ (the length of the vector $\mathbf{U}^{n,k}$) and $n$ at each time step.
(c) The total computational times for the FAOM-P4 algorithm and $L1$ formula with $N=21$ and $\alpha=0.5$.
(d) The total computational times for the FAOM-P9 algorithm and $L1-2$ formula with $N=21$ and $\alpha=0.5$.
  }\label{fig:1-5}
\end{figure}

\section{Nonlinear fractional diffusion equation}
We now consider the initial value problem of the nonlinear fractional diffusion equation as follows 
\begin{equation}\label{eq:IVP-N}
\begin{aligned}
&^C_0{\cal D}^\alpha_t u(x,t)=u_{xx}(x,t)+f(u)+g(x,t)\qquad &x\in \Omega_x, ~t>0,\\
&u(x,0)=u_0(x)\qquad & x\in\Omega_x,\\
&u(x,t)=\varphi(x,t)\qquad & x\in \partial \Omega_x,~t>0.
\end{aligned}
\end{equation}
In this section, we focus on the discretization of the fractional diffusion derivative and discretize the spatial derivative $u_{xx}$ by the second-order finite difference
scheme given in (\ref{eq:central}). To complete the discretization of the nonlinear fractional diffusion equation, 
it still needs to consider the approximation of $f(u)$. If we treat this term implicitly, a nonlinear algebraic system is constructed and 
needs to be solved  at each time step.
This may lead extra computational cost and make the algorithm complicated. In \cite{jiang, lidf16}, $f(u(x_i,t_{j+1}))$ is explicitly approximated as $f(u_i^j)$ 
at the $(j+1)$-th time step. This explicit approximation is high efficient and easily implemented. However, the method only
enjoys a first-order accuracy in time even $L1$ formula is used \cite{jiang, lidf16}. 
This is because the accuracy of the approximation $u^{j+1}\approx u^j$ is only first-order. In this paper, we treat $f(u)$ explicitly 
with a high order approximation of $u^{j+1}$ by the solution of the 
previous several time steps. At the $(j+1)$-th time step, the discrete scheme for nonlinear fractional diffusion 
equation (\ref{eq:IVP-N}) is given as follows 
\begin{equation}\label{eq:4.2}
\begin{aligned}
&^C_0{\cal D}^{F,\alpha}_t u_i^{j+1}=\frac{u^{j+1}_{i+1}+u^{j+1}_{i-1}-2u^{j+1}_i}{\Delta x^2}+f(\tilde {u}^{j+1}_i)+g(x_i,t_{j+1})&1\leq i\leq N-1,\\
&u_i^0=u_0(x_i)& 0\leq i\leq N,\\
&u_i^{j+1}=\varphi(x_{i},t_{j+1})&i=0~\txn{or}~i=N,
\end{aligned}
\end{equation}
where $\tilde {u}^{j+1}_i$ is a high order approximation of ${u}^{j+1}_i$. In this paper, $\tilde {u}^{j+1}_i$  equals  $2u^{j}_i-u^{j-1}_i$ with $j>0$ 
and $\tilde {u}^{1}_i=u_i^0$ for $L1$ approach. For $L1-2$ approach,
$\tilde {u}^{j+1}_i$  equals  $3u_i^j-3u_i^{j-1}+u_i^{j-2}$ for $j>1$, $\tilde {u}^{2}_i=2u^{1}_i-u^{0}_i$, and
$\tilde {u}^{1}_i=u_i^0$.

\noindent\textbf{Example 4.1}. In (\ref{eq:IVP-N}), we assume the computational domain $\Omega_x=[0,\pi]$. The nonlinear term $f(u)$, the source term $g(x,t)$, the initial data $u_0(x)$,
and the boundary value $\varphi(x,t)$ are given by
\begin{equation}
\begin{aligned}
f(u)~~=&0.01u(1-u),\\g(x,t)=
&\Gamma(4+\alpha)x^4(\pi-x)^4\exp(-x)t^3/6-x^2(\pi-x)^2\big\{t^{3+\alpha}\exp(-x)\\
&\quad [x^2(56-16x+x^2)-2\pi x(28-12x+x^2)+\pi^2(12-8x+x^2)]\\
&\quad +4(3\pi^2-14\pi x+14x^2)\big\}-0.01x^4(\pi-x)^4\big[\exp(-x)t^{3+\alpha}+1\big]\\
&\quad \big\{1-x^4(\pi-x)^4\big[\exp(-x)t^{3+\alpha}+1\big]\big\}  \quad\qquad x\in\Omega_x,~t\in(0,T],\\
u_0(x)~=&
0 \quad\qquad\qquad\qquad\qquad\qquad\qquad\qquad\qquad\qquad~ x\in\Omega_x,\\
\varphi(x,t)=&x^4(\pi-x)^4\big[\exp(-x)t^{3+\alpha}+1\big]\qquad\qquad\qquad~~~ x\in\partial\Omega_x,~t\in(0,T].
\end{aligned}
\end{equation}
It is clear that the nonlinear problem (\ref{eq:IVP-N}) has the following exact solution
\begin{equation}
u(x,t)=x^4(\pi-x)^4\big[\exp(-x)t^{3+\alpha}+1\big]\qquad x\in \Omega_x,~t\in[0,T].
\end{equation}

We test the accuracy of the FAOM-P$K$ algorithm for the nonlinear fractional diffusion equation with $\alpha=0.25$, 0.5, and 0.9.
To understand the accuracy of the FAOM-P$K$ scheme in time, we solve the problem with different time step sizes $h = 1/10, 1/20, 1/40, 1/80, 1/160, $
and a fixed spatial mesh size $\Delta x= \pi/5000$. For the reason of comparison, we also simulate the example by the direct method,
i.e., the $L1$ formula and $L1-2$ formula.
The computational errors and numerical convergence orders for the different methods with $\alpha=0.25$, 0.5, 
and 0.9 are given in Table \ref{Table:4-1}. As reported in the table, both the direct method
and the FAOM-P$K$ algorithm reach the ideal convergence orders, which are $2 -\alpha$ and $3 -\alpha$, respectively.
To check the convergence rate of our FAOM-P$K$ method in space, we simulate the case
with different spatial mesh sizes $\Delta x = \pi/80,\,\pi/40,\,/160,\,\pi/320,\,\pi/640$ and a fixed time step size $h =2^{-14}$.
The results are given in Table  \ref{Table:4-2}, which clearly shows that the FAOM-P$K$ has almost the same accuracy as
the corresponding direct method, but takes much less computational time.

\begin{table}[htb]\caption{The errors and convergence orders in time with fixed spatial mesh size
$\Delta x=\pi/5000$ for the proposed methods. 
$T=1$, ${\cal N}_{\tau}=2$. 
}\label{Table:4-1}
\begin{center}
\begin{tabular}{|c|cc|cc|cc|cc|}\hline
&\multicolumn{4}{c|}{$L1$ formula} &   \multicolumn{4}{|c|}{$L1-2$ formula}
\\
\hline
 \multirow{2}*{$h$}
  &   \multicolumn{2}{|c|}{\underline{Direct scheme}} &   \multicolumn{2}{|c|}{\underline{FAOM-P$4$}}
  &  \multicolumn{2}{|c|}{\underline{Direct scheme}}&   \multicolumn{2}{|c|}{\underline{FAOM-P$9$}} \\
   & $E(\cdot,\cdot)$  & $r_t$ & $E(\cdot,\cdot)$  & $r_t$& $E(\cdot,\cdot)$  & $r_t$& $E(\cdot,\cdot)$  & $r_t$\\ 
 \hline
      \multicolumn{9}{|c|}{$\alpha=0.9$} \\ 
 \hline
 1/10   &  3.72e-1	  & 1.25 & 3.72e-1  & 1.25&  6.76e-2  & 2.18& 6.76e-2  & 2.18\\ 
 1/20   & 1.56e-1 	  & 1.19 & 1.56e-1  & 1.19& 1.49e-2  & 2.16& 1.49e-2  & 2.16\\ 
 1/40   & 6.86e-2 	  & 1.15 & 6.87e-2  & 1.14& 3.33e-3  & 2.13& 3.33e-3 & 2.13\\ 
 1/80   & 3.10e-2         & 1.12 & 3.10e-2  & 1.12& 7.61e-4  & 2.11& 7.61e-4  & 2.11\\ 
 1/160 & 1.42e-2 	  & -      & 1.43e-2  & -     & 1.76e-4  & -    & 1.77e-4       & -\\ 
 \hline
       \multicolumn{9}{|c|}{$\alpha=0.5$} \\ 
 \hline
 1/10   & 1.19e-1	  & 1.68 & 1.19e-2  & 1.68& 2.06e-2  & 2.70& 2.06e-2  & 2.70\\ 
 1/20   & 3.71e-2 	  & 1.65 & 3.71e-2  & 1.64& 3.17e-3  & 2.70& 3.16e-3  & 2.69\\ 
 1/40   & 1.18e-2 	  & 1.61 & 1.19e-2  & 1.61& 4.91e-4  & 2.63& 4.91e-4  & 2.63\\ 
 1/80   & 3.87e-3         & 1.60 & 3.89e-3  & 1.57& 7.94e-5  & 2.42& 7.94e-5  & 2.41\\ 
 1/160 & 1.29e-3 	  & -       & 1.31e-3  & -     & 1.48e-5  &  -& 1.49e-5  & -     \\ 
 \hline
        \multicolumn{9}{|c|}{$\alpha=0.25$} \\ 
 \hline
 1/10   &  7.22e-2	  & 1.88 & 7.22e-2  & 1.88& 1.27e-2  & 2.91& 1.27e-2  & 2.91\\ 
 1/20   &  1.96e-2 	  & 1.89 & 1.96e-2  & 1.89& 1.70e-3  & 2.90& 1.70e-3  & 2.90\\ 
 1/40   &  5.30e-3 	  & 1.89 & 5.31e-3  & 1.88& 2.27e-4  & 2.82& 2.27e-4  & 2.82\\ 
 1/80   &  1.43e-3        & 1.87 & 1.44e-3  & 1.86 &  3.22e-5  & 2.28 & 3.22e-5  & 2.28\\ 
 1/160 &  3.91e-4 	  & -       & 3.97e-4  & -      & 6.64e-6  & - & 6.64e-6  & -      \\ 
 \hline
\end{tabular}
\end{center}
\end{table}

\begin{table}[htb]\caption{The errors and convergence orders in space with fixed time step size
$h=2^{-14}$ for the proposed methods. 
 ${\cal N}_{\tau}=2$, $T=1$, $\alpha=0.25$. Here CPU denotes the total compute time on the finest mesh.
}\label{Table:4-2}
\begin{center}
\begin{tabular}{|c|cc|cc|cc|cc|}\hline
&\multicolumn{4}{c|}{$L1$ formula} &   \multicolumn{4}{|c|}{$L1-2$ formula}
\\
\hline
 \multirow{2}*{$\Delta x$}
  &   \multicolumn{2}{|c|}{\underline{Direct scheme}} &   \multicolumn{2}{|c|}{\underline{FAOM-P$4$}}
  &  \multicolumn{2}{|c|}{\underline{Direct scheme}}&   \multicolumn{2}{|c|}{\underline{FAOM-P$9$}} \\
   & $E(\cdot,\cdot)$  & $r_s$ & $E(\cdot,\cdot)$  & $r_s$& $E(\cdot,\cdot)$  & $r_s$& $E(\cdot,\cdot)$  & $r_s$\\ 
 \hline
 $\pi/80$   &  1.12e-2	  & 2.00 & 1.12e-2  & 2.00&  1.12e-2  & 2.00& 1.12e-2  & 2.00\\ 
 $\pi/160$   & 2.81e-3 	  & 2.00 & 2.80e-3  & 2.00& 2.81e-3  & 2.00& 2.80e-3  & 2.00\\ 
 $\pi/320$   & 7.01e-4 	  & 2.00 & 7.02e-4  & 1.98& 7.01e-4  & 2.00& 7.01e-4 & 2.00\\ 
 $\pi/640$   & 1.75e-4         & - & 1.78e-4  & -& 1.75e-4  & -& 1.75e-4  & -\\ 
 \hline
CPU(s) & \multicolumn{2}{|c|}{807.43}     & \multicolumn{2}{|c|}{40.50}     &  \multicolumn{2}{|c|}{1451.29}   &  \multicolumn{2}{|c|}{70.85}\\ 
 \hline
  \end{tabular}
\end{center}
\end{table}

We next consider the initial value problem of the nonlinear fractional diffusion equation on the unbounded domain as follows 
\begin{equation}\label{eq:IVP-N-1}
\begin{aligned}
&^C_0{\cal D}^\alpha_t u(x,t)=u_{xx}(x,t)+f(u)\qquad &x\in \mathbb R, ~t>0,\\
&u(x,0)=u_0(x)\qquad & x\in\mathbb R.\\
\end{aligned}
\end{equation}
By setting $f(u)=-u(1-u)$, (\ref{eq:IVP-N-1}) is the time fractional Fisher equation
which is used in an infinite medium \cite{fisher}, the chemical kinetics \cite{malflict}, flame propagation \cite{frank},
and many other scientific problems \cite{merdan}. By setting $f(u)=-0.1u(1-u)(u-0.001)$, (\ref{eq:IVP-N-1})
is the time fractional Huxley equation, which is used to describe the transmission of nerve impulses \cite{Fitzhugh, Nagumo} with many
applications in biology and the population genetics in circuit theory \cite{Shih}.

If the initial data $u_0(x)$ is compactly supported on $\Omega_x=[a,b]$, 
we solve the fractional diffusion equation (\ref{eq:IVP-N-1}) on a bounded domain $[a,b]$
with absorbing bounder conditions (ABCs) \cite{lidf16}. The inner points is still discretized by the first equation of (\ref{eq:4.2}) and
the discretization of the points on the boundary
is given by the ABCs as follows
\begin{equation}\label{eq:4.10}
\begin{aligned}
&(\tilde \delta _x+3s_0^{\frac \alpha 2} )^C_0{\cal D}^{F,\alpha}_t u(x_{N-1},t_{j+1})+
(3s_0^{\alpha }\tilde \delta _x+s_0^{\frac {3\alpha} 2})u_{N-1}^{j+1}=(\tilde \delta _x+3s_0^{\frac \alpha 2})f(\tilde u_{N-1}^{j+1}),\\
&(\tilde \delta _x-3s_0^{\frac \alpha 2} )^C_0{\cal D}^{F,\alpha}_t u(x_{1},t_{j+1})+
(3s_0^{\alpha }\tilde \delta _x-s_0^{\frac {3\alpha} 2})u_{1}^{j+1}=(\tilde \delta _x-3s_0^{\frac \alpha 2})f(\tilde u_{1}^{j+1}),
\end{aligned}
\end{equation}
where $\tilde{\delta}_x u_i^{j+1}=\frac{u_{i+1}^{j+1}-u_{i-1}^{j+1}}{2\Delta x}$ and $s_0=3$ according to  \cite{lidf16}.

\noindent\textbf{Example 4.2} We consider the time fractional Fisher equation with $f(u)=-u(1-u)$ in (\ref{eq:IVP-N-1}) and 
initial condition 
$$
u(x,0)=\sqrt{\frac {10} \pi}\exp {(-10 x^2)}.
$$
The computational domain is set as $[-6,6]$.
Since it is difficult to obtain the exact solution of the time fractional equation on the unbounded domain, here and below we take the solution on
a very fine mesh as the reference solution. Table \ref{Table:4-3} presents the numerical results
for $\alpha=0.25$ and $0.75$, which shows that the FAOM-P$K$ algorithm has the same convergence order
in time as the corresponding direct scheme. The convergence orders in time of the direct scheme
and the FAOM-P$K$ algorithm are higher than $1$, but lower than the ideal convergence order.
To understand it, we plot the numerical solution in Fig. \ref{fig:1-6}, which shows that $u'(t)$ has singularity at $t=0$.
How to obtain a high accurate method for the solution with singularity is still an unsolved problem.  
Due to the approximation of $f(u)$, the convergence rate in time of our scheme is high than the scheme reported 
in \cite{jiang, lidf16}. Table  \ref{Table:4-4} indicates that the FAOM-P$K$ algorithm has the second-order of accuracy in space and  
takes less computational time than the direct algorithm.

\begin{table}[htb]\caption{The errors and convergence orders for the Fisher equation in time with fixed spatial mesh size
$\Delta x=3\times2^{-10}$ for the proposed methods. 
 ${\cal N}_{\tau}=2$, $T=1$. 
}\label{Table:4-3}
\begin{center}
\begin{tabular}{|c|cc|cc|cc|cc|}\hline
&\multicolumn{4}{c|}{$L1$ formula} &   \multicolumn{4}{|c|}{$L1-2$ formula}
\\
\hline
 \multirow{2}*{$h$}
  &   \multicolumn{2}{|c|}{\underline{Direct scheme}} &   \multicolumn{2}{|c|}{\underline{FAOM-P$4$}}
  &  \multicolumn{2}{|c|}{\underline{Direct scheme}}&   \multicolumn{2}{|c|}{\underline{FAOM-P$9$}} \\
   & $\|\mathbf{e}^{N_T}\|_\infty$  & $r_t$ & $\|\mathbf{e}^{N_T}\|_\infty$  & $r_t$& $\|\mathbf{e}^{N_T}\|_\infty$  & $r_t$& $\|\mathbf{e}^{N_T}\|_\infty$  & $r_t$\\ 
 \hline
      \multicolumn{9}{|c|}{$\alpha=0.25$} \\ 
 \hline
$2^{-8}$   &  1.92e-4	  & 1.15 & 1.93e-4  & 1.14&  1.81e-4  & 1.16& 1.88e-4  & 1.15\\ 
$2^{-9}$  & 8.63e-5 	  & 1.27 & 8.74e-5  & 1.24& 8.12e-5  & 1.28& 8.49e-5  & 1.26\\ 
$2^{-10}$   & 3.57e-5  & 1.63 & 3.70e-5  & 1.52& 3.35e-5  & 1.63& 3.54e-5 & 1.59\\ 
$2^{-11}$   & 1.16e-5  & - 	     & 1.29e-5  & -      & 1.08e-5  & -& 1.17e-5  & -\\ 
 \hline     \multicolumn{9}{|c|}{$\alpha=0.75$} \\ 
 \hline
$2^{-8}$   &  3.19e-4	  & 1.21 & 3.18e-4  & 1.21&  2.27e-4  & 1.36& 2.36e-4  & 1.34\\ 
$2^{-9}$     & 1.38e-4  & 1.29 & 1.37e-4  & 1.30& 8.85e-5  & 1.44& 9.34e-5  & 1.41\\ 
$2^{-10}$   & 5.65e-5  & 1.62 & 5.58e-5  & 1.66& 3.26e-5  & 1.75& 3.52e-5 & 1.68\\ 
$2^{-11}$   & 1.83e-5  & -       & 1.76e-5  &       -& 9.67e-6  & -      & 1.10e-5  & -\\ 
 \hline
  \end{tabular}
\end{center}
\end{table}

\begin{figure}[htb!]
\begin{center}
{\includegraphics[width=0.47\textwidth]{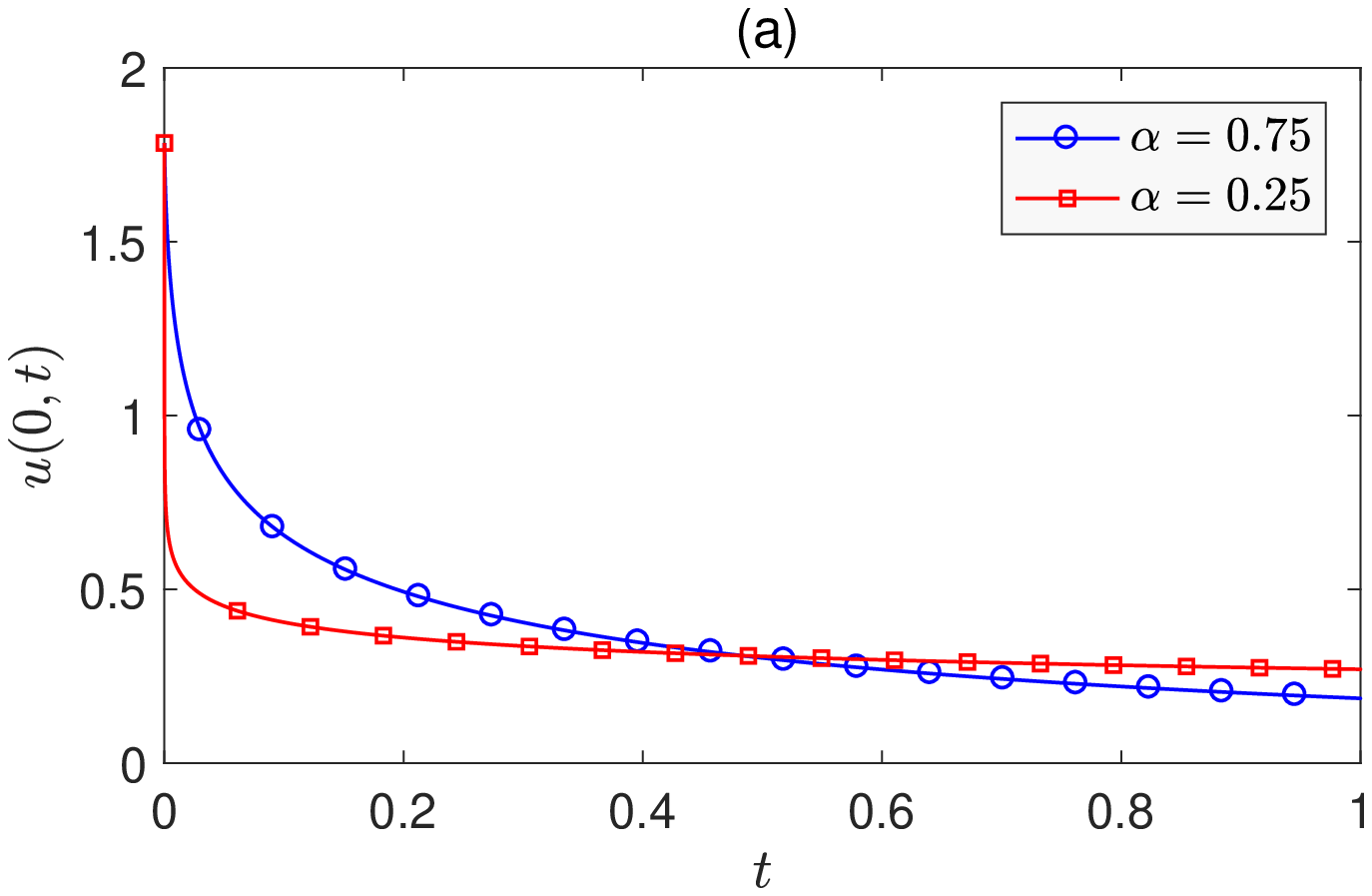}}
{\includegraphics[width=0.47\textwidth]{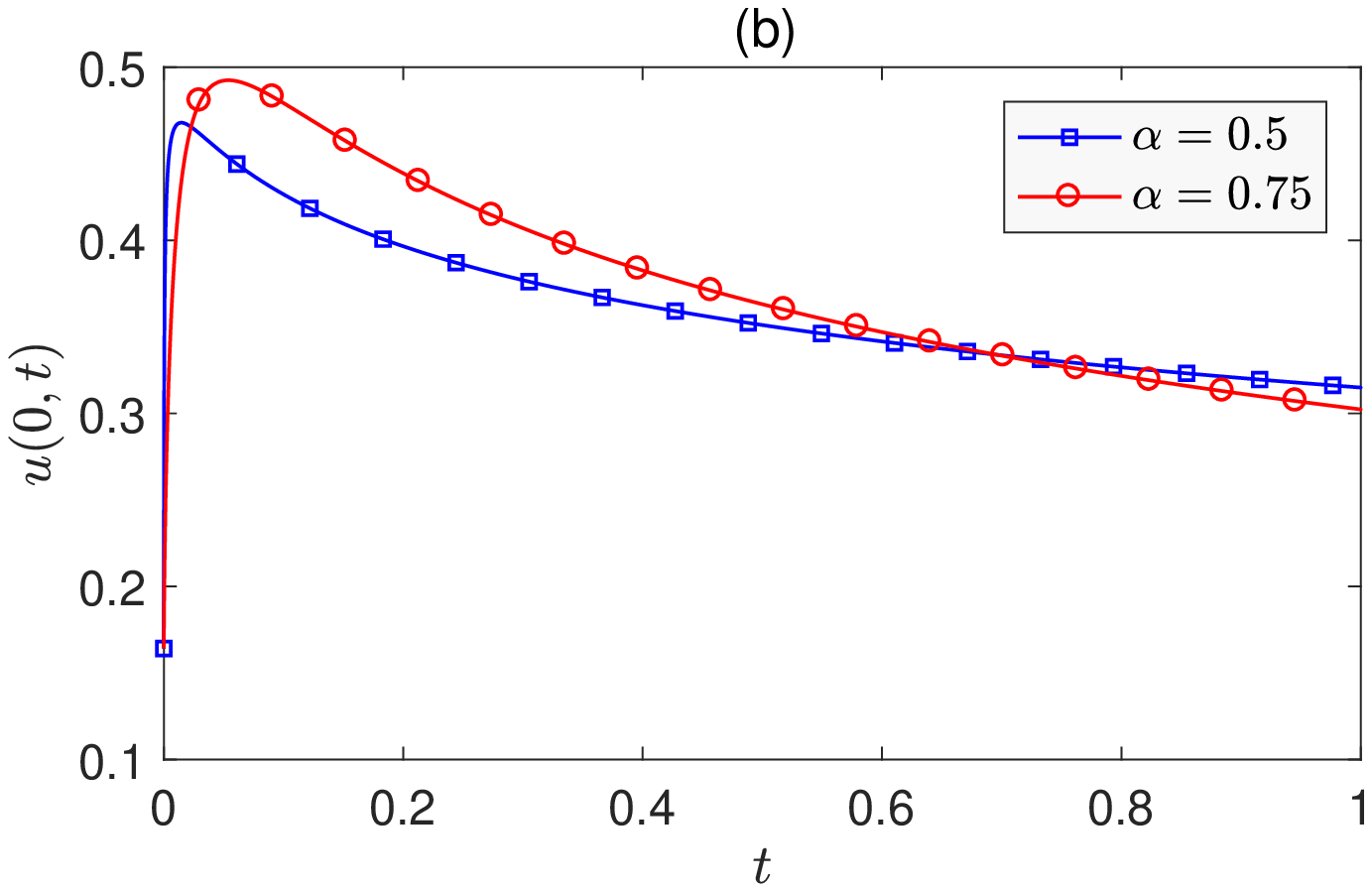}}
\end{center}
\caption{
Numerical solutions at $x=0$. (a) The time fractional Fisher equation with $\alpha=0.25,\,0.75$. (b) The time fractional
Huxley equation with $\alpha=0.5,\,0.75$.
  }\label{fig:1-6}
\end{figure}

\begin{table}[htb]\caption{The errors and convergence orders for the Fisher equation in space with fixed time step size
$h=2^{-14}$ for the proposed methods. 
 ${\cal N}_{\tau}=2$, $T=1$. Here CPU denotes the total compute time on the finest mesh.
}\label{Table:4-4}
\begin{center}
\begin{tabular}{|c|cc|cc|cc|cc|}
\hline
 \multirow{2}*{$\Delta x$}
  &   \multicolumn{2}{|c|}{\underline{Direct scheme}} &   \multicolumn{2}{|c|}{\underline{FAOM-P$4$}}
  &  \multicolumn{2}{|c|}{\underline{Direct scheme}}&   \multicolumn{2}{|c|}{\underline{FAOM-P$4$}} \\
   & $\|\mathbf{e}^{N_T}\|_\infty$  & $r_s$ & $\|\mathbf{e}^{N_T}\|_\infty$  & $r_s$& $\|\mathbf{e}^{N_T}\|_\infty$  & $r_s$& $\|\mathbf{e}^{N_T}\|_\infty$  & $r_s$\\ 
 \hline
   &   \multicolumn{4}{|c|}{$\alpha=0.25$} & \multicolumn{4}{|c|}{$\alpha=0.75$}\\ 
 \hline
$3/2^{5}$  &  6.92e-4	  & 2.03 & 6.94e-4  & 2.02&  3.53e-4  & 2.03& 3.53e-4  & 2.04\\ 
$3/2^{6}$   & 1.69e-4 	  & 2.07 & 1.71e-4  & 2.04& 8.66e-5  & 2.07& 8.58e-5  & 2.12\\ 
$3/2^{7}$   & 4.03e-5 	  & 2.32 & 4.17e-5  & 2.14& 2.06e-5  & 2.32& 1.98e-5 & 2.60\\ 
$3/2^{8}$   & 8.05e-6         & - 	    & 9.48e-6  & -	& 4.12e-6  & -& 3.34e-6  & -\\ 
 \hline
CPU(s) & \multicolumn{2}{|c|}{938.50}     & \multicolumn{2}{|c|}{33.46}     &  \multicolumn{2}{|c|}{905.82}   &  \multicolumn{2}{|c|}{29.15}\\ 
 \hline
  \end{tabular}
\end{center}
\end{table}

\noindent\textbf{Example 4.3}. We consider the time fractional Huxley equation with $f(u)=-0.1u(1-u)(u-0.001)$ in (\ref{eq:IVP-N-1}) and 
initial condition 
$$
u(x,0)=\exp {\big(-10 (x-0.5)^2)}+\exp {\big(-10 (x+0.5)^2)}.
$$
The computational domain is set as $[-8,8]$.
Table \ref{Table:4-5} presents the numerical results
for $\alpha=0.5$, which shows that the FAOM-P$K$ algorithm has the same convergence order
in time as the corresponding direct scheme. Similar to the above example, the convergence orders in time of the direct scheme
and the FAOM-P$K$ algorithm are higher than $1$, but lower that the ideal convergence order.
Due to the approximation of $f(u)$, the convergence order in time of our scheme is higher than the scheme reported 
in \cite{jiang, lidf16}. Table  \ref{Table:4-6} indicates that the FAOM-P$K$ algorithm has the second-order of accuracy in space and  
takes less computational time than the direct algorithm.

\begin{table}[htb]\caption{The errors and convergence orders for the Huxley equation in time with fixed spatial mesh size
$\Delta x=2^{-6}$ for the proposed methods. 
 ${\cal N}_{\tau}=2$, $T=1$, $\alpha=0.5$. 
}\label{Table:4-5}
\begin{center}
\begin{tabular}{|c|cc|cc|cc|cc|}\hline
&\multicolumn{4}{c|}{$L1$ formula} &   \multicolumn{4}{|c|}{$L1-2$ formula}
\\
\hline
 \multirow{2}*{$h$}
  &   \multicolumn{2}{|c|}{\underline{Direct scheme}} &   \multicolumn{2}{|c|}{\underline{FAOM-P$4$}}
  &  \multicolumn{2}{|c|}{\underline{Direct scheme}}&   \multicolumn{2}{|c|}{\underline{FAOM-P$9$}} \\
   & $\|\mathbf{e}^{N_T}\|_\infty$  & $r_t$ & $\|\mathbf{e}^{N_T}\|_\infty$  & $r_t$& $\|\mathbf{e}^{N_T}\|_\infty$  & $r_t$& $\|\mathbf{e}^{N_T}\|_\infty$  & $r_t$\\ 
 \hline
$2^{-5}$   &  8.94e-4	  & 1.13 & 8.94e-4  & 1.12&  5.49e-4  & 1.10& 6.29e-4  & 1.10\\ 
$2^{-6}$  & 4.10e-4 	  & 1.24 & 4.10e-4  & 1.24& 2.56e-4  & 1.23& 2.94e-4  & 1.21\\ 
$2^{-7}$   & 1.73e-4   & 1.60 & 1.74e-4   & 1.58& 1.09e-4  & 1.60& 1.28e-4 & 1.50\\ 
$2^{-8}$   & 5.72e-5   & -       & 5.81e-5   & -      & 3.59e-5  & -& 4.51e-5  & -\\ 
 \hline
  \end{tabular}
\end{center}
\end{table}

\begin{table}[htb]\caption{The errors and convergence orders for the Huxley equation in space with fixed time step size
$h=2^{-14}$ for the proposed methods. 
 ${\cal N}_{\tau}=2$, $T=1$. Here CPU denotes the total compute time on the finest mesh.
}\label{Table:4-6}
\begin{center}
\begin{tabular}{|c|cc|cc|cc|cc|}\hline
 \multirow{2}*{$\Delta x$}
  &   \multicolumn{2}{|c|}{\underline{Direct scheme}} &   \multicolumn{2}{|c|}{\underline{FAOM-P$4$}}
  &  \multicolumn{2}{|c|}{\underline{Direct scheme}}&   \multicolumn{2}{|c|}{\underline{FAOM-P$4$}} \\
   & $\|\mathbf{e}^{N_T}\|_\infty$  & $r_s$ & $\|\mathbf{e}^{N_T}\|_\infty$  & $r_s$& $\|\mathbf{e}^{N_T}\|_\infty$  & $r_s$& $\|\mathbf{e}^{N_T}\|_\infty$  & $r_s$\\ 
   \hline
   &\multicolumn{4}{c|}{$\alpha=0.5$} &   \multicolumn{4}{|c|}{$\alpha=0.75$}
\\
\hline
$2^{-2}$   &  2.14e-3	  & 2.14 & 2.14e-3  & 2.13& 1.41e-3  & 2.12& 1.41e-3  & 2.12\\ 
$2^{-3}$   & 4.88e-4 	  & 2.10 & 4.89e-4  & 2.08& 3.24e-4  & 2.09& 3.24e-4  & 2.10\\ 
$2^{-4}$  & 1.14e-4 	  & 2.33 & 1.15e-4  & 2.27& 7.61e-5  & 2.33& 7.57e-5 & 2.36\\ 
$2^{-5}$  & 2.27e-5         & - & 2.40e-5  & -	& 1.52e-5  & -& 1.47e-5  & -\\ 
 \hline
CPU(s) & \multicolumn{2}{|c|}{743.33}     & \multicolumn{2}{|c|}{25.81}     &  \multicolumn{2}{|c|}{773.61}   &  \multicolumn{2}{|c|}{25.73}\\ 
 \hline
  \end{tabular}
\end{center}
\end{table}

\section{Conclusions}
In this paper, we present a high order fast algorithm with almost optimum memory for the Caputo fractional derivative. 
The fast algorithm is based on a nonuniform split of the interval $[0,t_n]$ and a polynomial approximation of the kernel
function $(1-\tau)^{-\alpha}$, in which the storage requirement and computational cost both are reduced from $O(n)$ to $O(\log n)$. 
We prove that the fast algorithm has the same convergence rate as that of the corresponding direct method, even a high order scheme is compared.
The fast algorithm is applied to solve the linear and nonlinear fractional diffusion equations.
Numerical results on linear and nonlinear fractional diffusion equations show that our fast scheme has the same order of
convergence as the corresponding direct methods, but takes much less computational time.

\end{document}